\newtheorem{theorem}{Theorem}[section]
\newtheorem{proposition}[theorem]{Proposition}
\newtheorem{corollary}[theorem]{Corollary}
\newtheorem{lemma}[theorem]{Lemma}
\newtheorem{rel}{Relation}
\newtheorem{prob}{Problem}
\newtheorem{rem}{Remark}[section]
\numberwithin{equation}{section}
\begin{document}
\title{Laplacian spectral characterization of some graph products}

\author{Suijie Wang,\quad Xiaogang Liu
\\
\footnotesize{Department of Mathematics, The Hong Kong University of Science and Technology, Clear Water Bay, Kowloon, Hong Kong, China}
\\
\footnotesize{Email: wangsuijie@gmail.com, lxg.666@163.com}
}

\date{\today}
\maketitle\linenumbers

\begin{abstract}
This paper studies the Laplacian spectral characterization of some graph products. We consider a class of connected graphs: $\mathscr{G}=\left\{G : |EG|\leq|VG|+1\right\}$, and characterize all graphs $G\in\mathscr{G}$ such that the products $G\times K_m$ are $L$-DS graphs. The main result of this paper states that, if $G\in\mathscr{G}$, except for $C_{6}$ and $\Theta_{3,2,5}$, is $L$-DS graph, so is the product $G\times K_{m}$. In addition, the $L$-cospectral graphs with $C_{6}\times K_{m}$ and $\Theta_{3,2,5}\times K_{m}$ have been found.

\bigskip
\noindent{\bf Keywords:} Laplacian Spectrum; $L$-cospectral graphs; $L$-DS graph

\bigskip

\noindent\textbf{AMS classification:} 05C50

\end{abstract}

\section{Introduction}\label{0000}
We start with some basic conceptions of graphs followed from \cite{Biggs}. Let $G=(VG,EG)$ be a graph with vertex set $VG$ and edge set $EG$, where $EG$ is a collection of 2-subsets of $VG$. All graphs considered here are simple and undirected. The \emph{adjacency matrix} $A(G)=(a_{u,v})~(u,v\in VG)$ of $G$ is a matrix whose rows and columns are labeled by $VG$, with $a_{u,v}=1$ if $\{u,v\}\in EG$ and $a_{u,v}=0$ otherwise. The matrix $L(G)=D(G)-A(G)$ is called the \emph{Laplacian matrix} of $G$, where $D(G)$ is a diagonal matrix whose diagonal entry is the degree of the corresponding vertex. Since the matrix $L(G)$ is real and symmetric, its eigenvalues are real numbers and called the \emph{Laplacian eigenvalues} of $G$. It can be shown that $L(G)$ is positive semidefinite. Assuming that $\lambda_1\geq\lambda_2\geq\cdots\geq\lambda_n(=0)$ are these eigenvalues, the multiset $\mathrm{Spec}(G)=\{\lambda_1,\ldots,\lambda_n\}$ is called the \emph{Laplacian spectrum} of $G$. For simplicity, we write $[\lambda_i]^{m_i}\in\mathrm{Spec}(G)$ to denote that the multiplicity of $\lambda_i$ is $m_i$. Two graphs are said to be  \emph{$L$-cospectral} if they share the same Laplacian spectrum. Two graphs $G$ and $H$ are said to be \emph{isomorphic} if there is a bijection between $VG$ and $VH$ which induces a bijection between $EG$ and $EH$. Throughout this paper, we write $G=H$ whenever $G$ and $H$ are isomorphic. A graph $G$ is called to be \emph{determined by its Laplacian spectrum}, or  \emph{$L$-DS graph} for short, if all graphs $L$-cospectral with $G$ are isomorphic to $G$.

Given two graphs $G_1$ and $G_2$ with disjoint vertex sets $VG_1$ and $VG_2$ and edge sets $EG_1$ and $EG_2$, the \emph{disjoint union}, or \emph{addition} for convenience, of $G_1$ and $G_2$ is defined to be the graph $G=(VG_1\cup VG_2, EG_1\cup EG_2)$, denoted by $G_1+ G_2$. Especially, $\underbrace{G+\cdots+G}_m$ is denoted by $mG$. The \emph{product} of graphs $G_1$ and $G_2$ is the graph $G_1+G_2$ together with all the edges joining $VG_1$ and $VG_2$, denoted by $G_1\times G_2$. Let $K_m$ be the \emph{complete graph} of $m$ vertices, $P_{m}$ the \emph{path} of $m$ vertices, and $C_m$ the \emph{cycle} of $m$ vertices, respectively. Clearly, the complete graph $K_m$ can be written as the product of $m$ isolated vertices. Let $K_1$ be an isolated vertex, then $K_m=\underbrace{K_1\times\cdots\times K_1}_m$. Similarly,  $mK_{1}=\underbrace{K_1+\cdots+ K_1}_m$ denotes the disjoint union of $m$ isolated vertices. A connected graph is called a  \emph{tree} if it contains no cycle, \emph{unicyclic} if exactly one cycle, and \emph{bicyclic} if two independent cycles. Let $G$ be a connected graph. A subgraph $S$ of $G$ is called a  \emph{spanning tree} of $G$ if $S$ is a tree and $VS=VG$. Denote by $s(G)$ the number of spanning trees of $G$. Obviously, $s(G)=0$ if $G$ is disconnected. These notations will be fixed throughout this paper.

%A graph $G$ is called \emph{$k$-regular} if $d(v)=k$ for all $v\in V$, where $d(v)$ is the number of neighbors of $v$, called the \emph{degree} of $v$.  The %\emph{complement} of a graph $G=(VG, EG)$ is defined to be the graph $\bar{G}$ with $V\bar{G}=VG$ and $E\bar{G}=\{\{u,v\}: \{u,v\}\notin EG\}$.

This paper is to characterize which graph products are determined by their Laplacian spectra. It is motivated by \cite{kn:Zhang09,kn:Liu07} that we propose the following problem.

\begin{prob}\label{prob1}
Characterize all graphs $G$ such that $G\times K_m$ are $L$-DS graphs.
\end{prob}

In \cite{kn:Zhang09}, the wheel graph $C_n\times K_1$ for $n\neq 6$ is proved to be $L$-DS graph. In the conclusion, the authors posed an interesting question. The question is that which graphs satisfy the following relation:
\begin{rel}\label{rel1}
If $G$ is a $L$-DS graph, then $G\times K_1$ is also a $L$-DS graph.
\end{rel}
Clearly, Relation \ref{rel1} is just a special case of Problem \ref{prob1}. It is known that if $G$ is disconnected, i.e., $G$ has at least two components, then $G$ always satisfies Relation \ref{rel1} (see Proposition 4 in \cite{kn:vandam07}). If $G$ is connected, we know that cycle $C_n$ with $n\neq6$ and path $P_n$ satisfy Relation \ref{rel1} \cite{kn:Zhang09,kn:Liu07}.

\begin{figure}[here] \setlength{\unitlength}{4pt}
\begin{picture}(25,23)(-16,-12)
\put(9,0){\circle*{1}} \put(13,8){\circle*{1}}
\put(13,-8){\circle*{1}} \put(21,8){\circle*{1}}
\put(21,-8){\circle*{1}} \put(25,0){\circle*{1}}
\put(17,-0){\circle*{1}} \put(17,0){\line(1,0){8}}
\put(17,0){\line(-1,0){8}} \put(17,0){\line(1,2){4}}
\put(17,0){\line(1,-2){4}} \put(17,0){\line(-1,2){4}}
\put(17,0){\line(-1,-2){4}} \put(9,0){\line(1,2){4}}
\put(9,0){\line(1,-2){4}} \put(21,8){\line(-1,0){7.5}}
\put(21,8){\line(1,-2){4}} \put(21,-8){\line(-1,0){7.5}}
\put(21,-8){\line(1,2){4}} \put(13,-12){\small{$C_{6} \times K_{1}$}}
\end{picture}
\begin{picture}(30,23)(-45,-12)
\put(-16,0){\circle*{1}} \put(-8,0){\circle*{1}}
\put(0,0){\circle*{1}} \put(8,0){\circle*{1}}
\put(16,0){\circle*{1}} \put(0,8){\circle*{1}}
\put(0,-8){\circle*{1}}\put(-16,0){\line(1,0){8}}
\put(8,0){\line(-1,0){8}} \put(0,8){\line(-2,-1){16}}
\put(0,8){\line(-1,-1){8}} \put(0,8){\line(0,-1){8}}
\put(0,8){\line(1,-1){8}} \put(0,8){\line(2,-1){16}}
\put(0,-8){\line(-2,1){16}} \put(0,-8){\line(-1,1){8}}
\put(0,-8){\line(0,1){8}} \put(0,-8){\line(1,1){8}}
\put(0,-8){\line(2,1){16}}
 \put(-9,-12){\small{$H_1=2K_{1}\times (2P_{2}+ K_{1})$}}
\end{picture}
\caption {The $L$-cospectral graphs $C_{6}\times K_{1}$  and
$H_1$}\label{F1}
\end{figure}
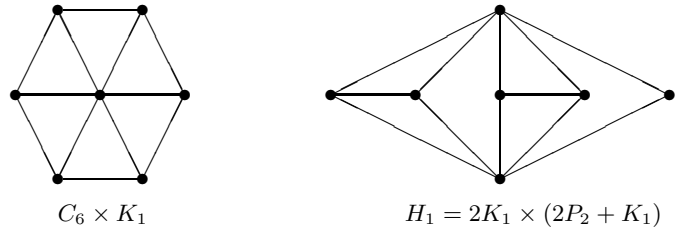

\begin{figure}[here]
\setlength{\unitlength}{4pt}
\begin{picture}(25,23)(-16,-12)
\put(9,0){\circle*{1}} \put(13,8){\circle*{1}}
\put(13,-8){\circle*{1}} \put(21,8){\circle*{1}}
\put(21,8){\line(0,-1){16}} \put(21,-8){\circle*{1}}
\put(25,0){\circle*{1}} \put(17,-0){\circle*{1}}
\put(17,0){\line(1,0){8}} \put(17,0){\line(-1,0){8}}
\put(17,0){\line(1,2){4}} \put(17,0){\line(1,-2){4}}
\put(17,0){\line(-1,2){4}} \put(17,0){\line(-1,-2){4}}
\put(9,0){\line(1,2){4}} \put(9,0){\line(1,-2){4}}
\put(21,8){\line(-1,0){7.5}} \put(21,8){\line(1,-2){4}}
\put(21,-8){\line(-1,0){7.5}} \put(21,-8){\line(1,2){4}}
\put(10.5,-11.5){\small{$\Theta_{3,2,5} \times K_{1}$}}
\end{picture}
\begin{picture}(30,23)(-45,-12)
\put(-16,0){\circle*{1}} \put(-8,0){\circle*{1}}
\put(0,0){\circle*{1}} \put(8,0){\circle*{1}}
\put(16,0){\circle*{1}} \put(0,8){\circle*{1}}
\put(0,-8){\circle*{1}}\put(-16,0){\line(1,0){8}}
\put(8,0){\line(-1,0){16}} \put(0,8){\line(-2,-1){16}}
\put(0,8){\line(-1,-1){8}} \put(0,8){\line(0,-1){8}}
\put(0,8){\line(1,-1){8}} \put(0,8){\line(2,-1){16}}
\put(0,-8){\line(-2,1){16}} \put(0,-8){\line(-1,1){8}}
\put(0,-8){\line(0,1){8}} \put(0,-8){\line(1,1){8}}
\put(0,-8){\line(2,1){16}}
\put(-9,-11.5){\small{$H_2=2K_{1} \times (P_{4}+ K_{1})$}}
\end{picture}
\caption {The $L$-cospectral graphs $\Theta_{3,2,5}\times K_{1}$ and
$H_2$}\label{F2}
\end{figure}
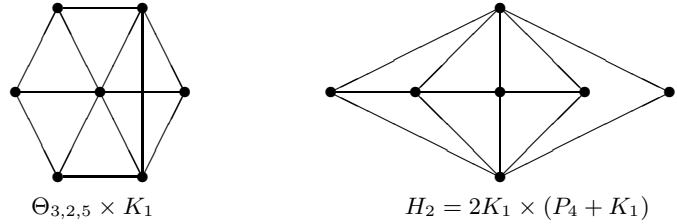

In this paper, we consider a class of connected graphs: $\mathscr{G}=\left\{G : |EG|\leq|VG|+1\right\}$, and characterize all graphs $G$ among $\mathscr{G}$ such that $G\times K_m$ are $L$-DS graphs.  Indeed, $\mathscr{G}$ consists of all connected trees, connected unicyclic graphs and connected bicyclic graphs. To characterize which connected trees satisfy Problem \ref{prob1} are investigated in Section \ref{1111}.  And we show that if a connected tree $T$ is $L$-DS, so is $T\times K_m$. The characterization for unicyclic graphs are investigated in Section \ref{2222}. We prove that if a connected unicyclic graph $U\ne C_6$ is $L$-DS, then $U\times K_m$ is also $L$-DS. At last, we consider the products of $L$-DS bicyclic graphs and $K_m$. It is shown that all $L$-DS bicyclic graphs, except for $\Theta_{3,2,5}$, satisfy Problem \ref{prob1}, where
$\Theta_{3,2,5}$ denotes the graph consisting of two cycles $C_{3}$ and $C_{5}$ who share a common path $P_2=C_3\cap C_5$. Meanwhile we find one new  pair of $L$-cospectral graphs, which are $\Theta_{3,2,5}\times K_{m}$ and $H_2\times K_{m-1}$, see Figure \ref{F2} for the case $m=1$. Indeed, $L$-cospectral graphs shown in Figure \ref{F1}, which are posed in \cite{kn:Zhang09}, can also be figured out by our proof in Section \ref{2222}.

\section{Preliminaries}
In this section, we mention some results, which will be used later.
\begin{lemma}\label{spanning tree}{\rm \cite{Biggs}} Let $\{\lambda_{1},\ldots,\lambda_{n-1},0\}$ be the Laplacian spectrum of the graph $G$. Then
\begin{eqnarray*}
% \nonumber to remove numbering (before each equation)
 s(G)=\dfrac{\lambda_{1}\lambda_{2}\cdots \lambda_{n-1}}{n}.
\end{eqnarray*}
\end{lemma}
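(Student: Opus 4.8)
The plan is to read off the product $\lambda_1\cdots\lambda_{n-1}$ from the characteristic polynomial of $L(G)$ and match it against a cofactor expansion, so that the classical Matrix-Tree Theorem can be invoked. Write $\phi(x)=\det(xI-L(G))=\prod_{i=1}^{n}(x-\lambda_i)$. Since $\lambda_n=0$, this factors as $\phi(x)=x\prod_{i=1}^{n-1}(x-\lambda_i)$, so the coefficient of $x^1$ in $\phi(x)$ equals the constant term of $\prod_{i=1}^{n-1}(x-\lambda_i)$, namely $(-1)^{n-1}\lambda_1\lambda_2\cdots\lambda_{n-1}$.

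Next I would compute the same coefficient a second way, through principal minors. Expanding $\det(xI-L(G))$ as a polynomial in $x$, the coefficient of $x^{n-k}$ is $(-1)^{k}$ times the sum of all $k\times k$ principal minors of $L(G)$. Taking $k=n-1$, the coefficient of $x^1$ equals $(-1)^{n-1}$ times the sum of the $n$ principal minors of size $n-1$, each obtained by deleting a matched row and column $i$ from $L(G)$.

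The final step invokes the Matrix-Tree Theorem: every such $(n-1)\times(n-1)$ principal minor (equivalently, diagonal cofactor) of $L(G)$ equals $s(G)$. Hence the sum of these $n$ minors is $n\,s(G)$, and the coefficient of $x^1$ equals $(-1)^{n-1}n\,s(G)$. Comparing the two expressions for this coefficient and cancelling the common factor $(-1)^{n-1}$ gives $\lambda_1\lambda_2\cdots\lambda_{n-1}=n\,s(G)$, which is the claimed identity after dividing by $n$.

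The only non-routine ingredient is the Matrix-Tree Theorem itself. If it may be quoted (as it is available in \cite{Biggs}), the argument above is pure bookkeeping with the characteristic polynomial. If one instead had to prove it from scratch, the main obstacle would be establishing that each diagonal cofactor of $L(G)$ counts spanning trees; this is handled by factoring $L(G)=N N^{\top}$ for an oriented incidence matrix $N$ and applying the Cauchy--Binet formula, whose surviving nonzero terms are in bijection with the spanning trees of $G$.
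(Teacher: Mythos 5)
Your argument is correct and complete: reading the coefficient of $x$ in $\det(xI-L(G))$ both from the factorization $x\prod_{i=1}^{n-1}(x-\lambda_i)$ and from the sum of the $(n-1)\times(n-1)$ principal minors, and then applying the Matrix-Tree Theorem to identify each diagonal cofactor with $s(G)$, is exactly the standard derivation. The paper offers no proof of this lemma at all --- it is quoted directly from \cite{Biggs} --- and your write-up is essentially the proof found there, so there is nothing to reconcile; note also that your argument correctly covers the disconnected case, where both sides vanish.
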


\begin{lemma}\label{Laplacian spectrum}{\rm\cite{kn:Kelmans74,kn:vandam03}}
Let $G$ be a graph. The following can be determined by its Laplacian spectrum:\vspace{1.5mm}
\\
\emph{(1)} The number of vertices of $G$. \\
\emph{(2)} The number of edges of $G$. \\
\emph{(3)} The number of components of $G$.\\
\emph{(4)} The number of spanning trees of $G$. \\
\emph{(5)} The sum of the squares of degrees of vertices.
\end{lemma}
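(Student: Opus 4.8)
The plan is to derive each of the five quantities as an explicit function of the Laplacian eigenvalues $\lambda_1 \geq \cdots \geq \lambda_{n-1} \geq \lambda_n = 0$, using the spectral moments $\mathrm{tr}(L(G)^k) = \sum_i \lambda_i^k$ together with two pieces of structural input: the quadratic form associated with $L(G)$ and the block decomposition of $L(G)$ over the components of $G$. Items (1), (2) and (5) reduce to the first two moments, item (4) is a direct restatement of Lemma \ref{spanning tree}, and only item (3) requires a genuine structural argument.

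For the easy items, the number of vertices $n$ in (1) is simply the number of eigenvalues in $\mathrm{Spec}(G)$ counted with multiplicity, so it is read off directly. For (2), I would use $\mathrm{tr}(L(G)) = \sum_{i} \lambda_i = \sum_{v \in VG} \deg(v) = 2|EG|$, whence $|EG| = \tfrac12 \sum_i \lambda_i$ is determined. For (5), I would expand $L(G)^2 = D(G)^2 - D(G)A(G) - A(G)D(G) + A(G)^2$ and take the trace. Since $A(G)$ has zero diagonal, $\mathrm{tr}(D(G)A(G)) = \mathrm{tr}(A(G)D(G)) = 0$, while $\mathrm{tr}(D(G)^2) = \sum_v \deg(v)^2$ and $\mathrm{tr}(A(G)^2) = 2|EG|$. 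Combining these with $\mathrm{tr}(L(G)^2) = \sum_i \lambda_i^2$ gives $\sum_{v \in VG} \deg(v)^2 = \sum_i \lambda_i^2 - \sum_i \lambda_i$, a function of the spectrum alone. Item (4) follows at once from Lemma \ref{spanning tree}, since $s(G) = \tfrac{1}{n}\lambda_1 \cdots \lambda_{n-1}$ depends only on the eigenvalues.

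The one step needing real structural work is (3), where I would show that the multiplicity of the eigenvalue $0$ equals the number of connected components. Writing $G$ as the disjoint union of its components makes $L(G)$ block diagonal, with one block $L(H)$ per component $H$, so it suffices to prove that $0$ is a simple eigenvalue of the Laplacian of any connected graph. For this I would invoke the quadratic form $x^{\top} L(H) x = \sum_{\{u,v\} \in EH} (x_u - x_v)^2 \geq 0$: a vector $x$ lies in the kernel precisely when $x_u = x_v$ across every edge, and connectivity then forces $x$ to be constant, so the kernel is one-dimensional. Hence the nullity of $L(G)$ counts the components and is read off from $\mathrm{Spec}(G)$. This connectivity argument is the only nonroutine ingredient; everything else is bookkeeping with traces, and I expect no serious obstacle beyond keeping the moment computations clean.
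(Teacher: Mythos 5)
Your proof is correct. The paper states this lemma as a known result, citing Kelmans--Chelnokov and van Dam--Haemers, and gives no proof of its own, so there is nothing to compare against; your argument --- reading off $n$ from the size of the spectrum, $|EG|$ and $\sum_{v}d(v)^2$ from the first two spectral moments of $L(G)$, the number of components from the nullity of $L(G)$ via the quadratic form $x^{\top}L(G)x=\sum_{\{u,v\}\in EG}(x_u-x_v)^2$ together with the block decomposition over components, and $s(G)$ from the Matrix--Tree formula of Lemma \ref{spanning tree} (which correctly yields $s(G)=0$ in the disconnected case) --- is the standard derivation and every step checks out.
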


%\begin{lemma}\label{Laplacian spectrum}{\rm\cite{kn:Kelmans74,kn:vandam03}}
%Let $G$ be a graph with $n$ vertices and $m$ edges. Write the characteristic polynomial of $L(G)$ as
%\begin{eqnarray*}
 %P_{L(G)}(\lambda)=q_0\lambda^n+q_1\lambda^{n-1}+\cdots+q_n.
%\end{eqnarray*}

%\noindent$(1)$ If $d_1\geq d_2\geq\cdots\geq d_n$ are the degrees of all vertices of $G$, then
%\begin{eqnarray*}
 % q_0=1,~~~q_1=-2m,~~~q_2=2m^2-m-\frac{1}{2}\sum_{i=1}^nd_i^2,~~~q_{n-1}=(-1)^{n-1}ns(G),~~~q_n=0.
%\end{eqnarray*}

%\noindent$(2)$ The number of components of $G$ is determined by its Laplacian spectrum.
%\end{lemma}

\begin{lemma}\label{product laplacian}{\rm\cite{kn:Merris98}}
Let $G$ and $H$ be two graphs with $|VG|=n$ and $|VH|=m$. Suppose $\mathrm{Spec(}G)=\{\mu_{1},\mu_{2},\ldots,\mu_{n-1},0\}$ and $\mathrm{Spec}(H)=\{\nu_{1},\nu_{2},\ldots,\nu_{m-1},0\}$. Then the Laplacian spectrum of the product $G\times H$ is
\begin{eqnarray*}
% \nonumber to remove numbering (before each equation)
 \mathrm{Spec}(G\times H)=\{n+m,m+\mu_{1},\ldots, m+\mu_{n-1},n+\nu_{1},\ldots, n+\nu_{m-1},0\}.
\end{eqnarray*}
\end{lemma}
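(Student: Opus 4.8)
The plan is to write the Laplacian of the join $G\times H$ in block form and then exhibit an explicit eigenbasis obtained by lifting the eigenvectors of $L(G)$ and $L(H)$. Order the $n+m$ vertices so that those of $G$ come first. In the join every vertex of $G$ gains $m$ new neighbours (all of $H$) and every vertex of $H$ gains $n$ new neighbours (all of $G$), while the internal adjacencies are unchanged and the cross block of the adjacency matrix is the all-ones matrix. Hence, writing $I_k$ for the identity and $J_{k\times\ell}$ for the all-ones matrix,
\[
L(G\times H)=\begin{pmatrix} L(G)+mI_n & -J_{n\times m} \\ -J_{m\times n} & L(H)+nI_m \end{pmatrix}.
\]

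First I would lift eigenvectors. Recall that an eigenvector $x$ of $L(G)$ for a positive eigenvalue $\mu_i$ is orthogonal to the all-ones vector $\mathbf{1}_n$, so $J_{m\times n}x=\mathbf{1}_m(\mathbf{1}_n^{\top}x)=0$; this vanishing of the cross term on zero-sum vectors is the crux. Padding $x$ with zeros then gives
\[
L(G\times H)\binom{x}{0}=\binom{(L(G)+mI_n)x}{-J_{m\times n}x}=\binom{(m+\mu_i)x}{0}=(m+\mu_i)\binom{x}{0}.
\]
This produces the $n-1$ eigenvalues $m+\mu_1,\ldots,m+\mu_{n-1}$, and symmetrically the eigenvectors of $L(H)$ give the $m-1$ eigenvalues $n+\nu_1,\ldots,n+\nu_{m-1}$.

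That accounts for $n+m-2$ eigenvalues, and every one of the corresponding (lifted) eigenvectors is orthogonal to the two ``characteristic'' vectors $\binom{\mathbf{1}_n}{0}$ and $\binom{0}{\mathbf{1}_m}$. Since $L(G\times H)$ is symmetric, the span $W$ of these two vectors is invariant, and the two remaining eigenvalues are exactly those of the restriction of $L(G\times H)$ to $W$. Using $L(G)\mathbf{1}_n=0$, $L(H)\mathbf{1}_m=0$, $J_{m\times n}\mathbf{1}_n=n\mathbf{1}_m$ and $J_{n\times m}\mathbf{1}_m=m\mathbf{1}_n$, a direct computation expresses this restriction in the basis $\{\binom{\mathbf{1}_n}{0},\binom{0}{\mathbf{1}_m}\}$ as
\[
\begin{pmatrix} m & -m \\ -n & n \end{pmatrix},
\]
whose trace is $n+m$ and determinant is $0$; its eigenvalues are therefore $0$ and $n+m$. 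Collecting all $n+m$ eigenvalues yields precisely the claimed spectrum.

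There is no serious obstacle here, as the result is elementary; the only points demanding care are the bookkeeping that the lifted vectors are genuinely orthogonal to $W$ (so that the $n+m-2$ lifted eigenvalues and the two eigenvalues from $W$ together exhaust the spectrum without double counting), and the key algebraic observation that $J_{m\times n}$ annihilates every vector whose entries sum to zero, which is exactly what makes the padded vectors eigenvectors of the join.
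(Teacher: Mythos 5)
Your proof is correct in substance, but note that the paper offers no proof of this lemma at all: it is quoted as a known result of Merris \cite{kn:Merris98}, so there is nothing internal to compare against. Your argument is the standard one --- write $L(G\times H)$ in block form with $L(G)+mI_n$ and $L(H)+nI_m$ on the diagonal and $-J$ off the diagonal, lift zero-sum eigenvectors by padding with zeros (the key point being $J_{m\times n}x=0$ for zero-sum $x$), and diagonalize the restriction to the invariant two-dimensional subspace spanned by the two characteristic vectors, which correctly yields $0$ and $n+m$. The one point you should tighten is the phrase ``an eigenvector $x$ of $L(G)$ for a \emph{positive} eigenvalue $\mu_i$'': if $G$ is disconnected, some of the $\mu_1,\ldots,\mu_{n-1}$ are themselves $0$, and your lifting as written does not produce the corresponding eigenvalues $m+0=m$ of the join. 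This case genuinely occurs in the paper's applications (the factors $G_i$ are frequently disconnected, e.g.\ $2K_1$ or $2P_2+K_1$). The repair is immediate: choose an orthogonal eigenbasis of $L(G)$ containing $\mathbf{1}_n$; the remaining $n-1$ basis vectors are all orthogonal to $\mathbf{1}_n$ whatever their eigenvalues, and your computation applies to each of them verbatim. With that adjustment the count of $n+m$ mutually orthogonal eigenvectors is complete and the proof is sound.
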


\begin{lemma}\label{lemma1}
Suppose $G$ is a $L$-DS graph. If there is a graph $H$ and a positive integer $m$ such that $\mathrm{Spec}(G\times K_m)=\mathrm{Spec}(H\times K_m)$, then we have $G= H$.
\end{lemma}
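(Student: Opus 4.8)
The plan is to reduce the isomorphism claim to an $L$-cospectrality claim between $G$ and $H$ themselves: once I show $\mathrm{Spec}(G)=\mathrm{Spec}(H)$, the hypothesis that $G$ is $L$-DS immediately forces $H=G$. So the entire content is to recover $\mathrm{Spec}(G)$ from $\mathrm{Spec}(G\times K_m)$ (and likewise for $H$) and to check that the recovered spectra coincide. The engine for this is Lemma \ref{product laplacian} applied with the second factor equal to $K_m$.

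First I would record the Laplacian spectrum of the complete graph, $\mathrm{Spec}(K_m)=\{[m]^{m-1},0\}$ (from $L(K_m)=mI-J$). Writing $|VG|=n$ and $\mathrm{Spec}(G)=\{\mu_1,\ldots,\mu_{n-1},0\}$, Lemma \ref{product laplacian} gives, after substituting $\nu_j=m$ for all $j$,
\[
\mathrm{Spec}(G\times K_m)=\{[n+m]^{m}\}\uplus\{m+\mu_1,\ldots,m+\mu_{n-1}\}\uplus\{0\},
\]
since the single leading entry $n+m$ together with the $m-1$ entries $n+\nu_j=n+m$ contributes exactly $m$ copies of $n+m$. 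Before cancelling I must match sizes: by Lemma \ref{Laplacian spectrum}(1) the number of vertices is a spectral invariant, and $G\times K_m$, $H\times K_m$ have $n+m$ and $|VH|+m$ vertices respectively, so the hypothesis $\mathrm{Spec}(G\times K_m)=\mathrm{Spec}(H\times K_m)$ forces $|VH|=n$. Thus both product spectra decompose with the identical ``$K_m$-part'' $C:=\{[n+m]^m\}\uplus\{0\}$.

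The final step is multiset cancellation. Writing $B_G=\{m+\mu_i^G\}_{i=1}^{n-1}$ and $B_H=\{m+\mu_i^H\}_{i=1}^{n-1}$, the equality $C\uplus B_G=C\uplus B_H$ of multisets yields $B_G=B_H$, because the multiplicity of each real number in a multiset union is additive, so the common summand $C$ cancels. Subtracting the constant $m$ from every element and re-adjoining a single $0$ then gives $\mathrm{Spec}(G)=\mathrm{Spec}(H)$, and the $L$-DS property of $G$ finishes the argument.

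I expect the only genuine subtlety — and the step worth stating carefully — is the overlap between the two parts: if some $\mu_i^G=n$, then $m+\mu_i^G=n+m$ coincides with a $K_m$-eigenvalue, so one cannot unambiguously identify which copies of $n+m$ came from $K_m$. The point is that this identification is unnecessary: cancelling the common submultiset $C$ from two equal multisets is valid regardless of such coincidences, which is precisely why I phrase the last step as multiset subtraction rather than as peeling off the largest eigenvalue with a prescribed multiplicity.
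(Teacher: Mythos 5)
Your proposal is correct and follows the same route as the paper: apply Lemma \ref{product laplacian} with $H=K_m$ to deduce $\mathrm{Spec}(G)=\mathrm{Spec}(H)$ and then invoke the $L$-DS hypothesis. The paper states this deduction in one line, whereas you supply the details it omits (the vertex-count matching and the multiset cancellation of the common $\{[n+m]^m\}\uplus\{0\}$ part), which is a faithful elaboration rather than a different argument.
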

\begin{proof}
Since $\mathrm{Spec}(G\times K_m)=\mathrm{Spec}(H\times K_m)$, Lemma \ref{product laplacian} implies that $ \mathrm{Spec}(G)=\mathrm{Spec}(H)$. Therefore, $G= H$ since $G$ is a $L$-DS graph.
\end{proof}

\begin{lemma}\label{product graphs}{\rm\cite{Grone}} Let $G$ be a connected graph with $n$ vertices. Then $n$ is the Laplacian eigenvalue with multiplicity $k$ if and only if $G$ is the product of exactly $k+1$ graphs.
\end{lemma}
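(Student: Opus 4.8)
The plan is to transfer both the eigenvalue $n$ and the product operation to the complement graph $\overline{G}$, where each has a transparent description. The key identity is $L(G)+L(\overline{G})=nI-J$, with $I$ the identity matrix and $J$ the all-ones matrix. Since every row sum of a Laplacian vanishes we have $L(G)J=JL(G)=0$, so $J$ commutes with $L(G)$ and hence $L(\overline{G})=nI-J-L(G)$ commutes with $L(G)$ too. The two Laplacians are therefore simultaneously diagonalizable: the all-ones vector $\mathbf{1}$ is a common eigenvector lying in both kernels, and on the hyperplane $\mathbf{1}^{\perp}$ we have $J\mathbf{x}=\mathbf{0}$, so $L(G)\mathbf{x}=\mu\mathbf{x}$ with $\mathbf{x}\perp\mathbf{1}$ forces $L(\overline{G})\mathbf{x}=(n-\mu)\mathbf{x}$.

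First I would extract the multiplicity of $n$ from this pairing. The vector $\mathbf{1}$ contributes the eigenvalue $0$, not $n$, to $L(G)$, so every eigenvector for the eigenvalue $n$ lies in $\mathbf{1}^{\perp}$ and there corresponds to an eigenvector of $L(\overline{G})$ for the eigenvalue $0$. Hence the multiplicity of $n$ as an eigenvalue of $L(G)$ equals $\dim\bigl(\ker L(\overline{G})\cap\mathbf{1}^{\perp}\bigr)$. Because $\dim\ker L(\overline{G})$ is the number $c(\overline{G})$ of connected components of $\overline{G}$, while $\mathbf{1}$ lies in $\ker L(\overline{G})$ but not in $\mathbf{1}^{\perp}$, intersecting with $\mathbf{1}^{\perp}$ removes exactly one dimension; thus the multiplicity of $n$ equals $c(\overline{G})-1$. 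In particular $n$ has multiplicity $k$ precisely when $\overline{G}$ has $k+1$ components.

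Next I would link the component count of $\overline{G}$ to product decompositions of $G$, via the elementary identity $\overline{G_1\times G_2}=\overline{G_1}+\overline{G_2}$ (the complement of a product is the disjoint union of the complements). Thus $G$ is a product of $t$ graphs exactly when $\overline{G}$ is a disjoint union of $t$ graphs, and the finest such decomposition uses the $c(\overline{G})$ connected components of $\overline{G}$: from $\overline{G}=H_1+\cdots+H_{k+1}$ we recover $G=\overline{H_1}\times\cdots\times\overline{H_{k+1}}$, each factor $\overline{H_i}$ being indecomposable as a product precisely because its complement $H_i$ is connected and so cannot be split as a nontrivial disjoint union. Combining this with the previous paragraph gives that $G$ is the product of exactly $k+1$ graphs if and only if $c(\overline{G})=k+1$ if and only if $n$ is a Laplacian eigenvalue of multiplicity $k$.

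The point needing the most care is the word \emph{exactly}: I must verify that the decomposition of $G$ into the components of $\overline{G}$ is the maximal product decomposition, which amounts to the clean dichotomy that a graph is product-indecomposable if and only if its complement is connected. Granting this, no factor $\overline{H_i}$ can be split further and the count of factors is unambiguous. The remaining work is purely the bookkeeping of $\ker L(\overline{G})$ together with the single dimension lost on passing to $\mathbf{1}^{\perp}$; the hypothesis that $G$ be connected is not essential to this algebra, since the identity relating the multiplicity of $n$ to $c(\overline{G})$ holds for every graph.
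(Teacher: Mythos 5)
Your argument is correct and complete: the identity $L(G)+L(\overline{G})=nI-J$ gives that the $n$-eigenspace of $L(G)$ is exactly $\ker L(\overline{G})\cap\mathbf{1}^{\perp}$, hence has dimension $c(\overline{G})-1$, and the duality $\overline{G_1\times G_2}=\overline{G_1}+\overline{G_2}$ correctly converts ``product of exactly $k+1$ factors'' into ``$\overline{G}$ has exactly $k+1$ components,'' with the maximality of the factor count secured by your observation that a factor is join-indecomposable iff its complement is connected. The paper itself gives no proof of this lemma --- it is quoted from Grone and Zimmermann --- so there is nothing internal to compare against; your complementation argument is the standard one underlying that reference, and your closing remark that connectivity of $G$ is not actually needed is also accurate (a disconnected $G$ has connected complement, so $k=0$ and $G$ is indecomposable as a join).
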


%\begin{Lemma}\label{algebraic connectivity}{\rm\cite{Fiedler}} If $G$ is a graph with $n$ vertices, assume $\lambda_{n-1}$ be %the second minimal Laplacian eigenvalue of $G$ and $d_n$ the minimal vertex degree of $G$. Then we have \[\lambda_{n-1}\leq %\dfrac{nd_n}{n-1}.\]
%\end{Lemma}

%\begin{lemma}\label{Complement}{\rm\cite{kn:Kelmans65}}
%Suppose $\bar{G}$ is the complement of the graph $G$. Let $\lambda_1\geq\lambda_2\geq\cdots\geq\lambda_n=0$ be the Laplacian spectrum of $G$, and $\bar{\lambda}_1\geq\bar{\lambda}_2\geq\cdots\geq\bar{\lambda}_n=0$ the Laplacian spectrum of $\bar{G}$. Then for $i=1,\ldots,n-1$, we have
%\begin{eqnarray*}
% \nonumber to remove numbering (before each equation)
  %\lambda_i+\bar{\lambda}_{n-i}=n.
%\end{eqnarray*}

%\end{lemma}

%\begin{Lemma}\label{constant}{\rm\cite{kn:vandam98}}
%Let $G$ be a graph with $n$ vertices, if $G$ has only two distinct nonzero Laplacian eigenvalues $\lambda_1$ and $\lambda_{2}$, then only two vertex degrees $d_{1}$ and $d_{2}$ occur. Moreover,
%\[d_{1}+d_{2}=\lambda_{1}+\lambda_{2}-1;   d_{1}d_{2}=\dfrac{n-1}{n}\lambda_{1}\lambda_{2}.\]
%\end{Lemma}

%\begin{Lemma}\label{max eigenvalue}{\rm\cite{kn:Li Jiong-Sheng}}
%Suppose $G$ is a graph with degree sequence $d_{1}\geq d_{2}\geq \cdots \geq d_{n}$. Let $\lambda(G)$ be the largest Laplacian eigenvalue of $G$. Then
%\[\lambda(G) \leq 2+\sqrt{(d_{1}+d_{2}-2)(d_{1}+d_{3}-2)}.\]
%When $G$ is a connected graph, the equality holds if and only if $G$ is a regular bipartite graph or a path with 3 or 4 vertices.
%\end{Lemma}

\begin{lemma}\label{2nd max eigenvalue}{\rm\cite{kn:R.Merris}}
Let $G$ be a graph and $\lambda(G)$ the largest Laplacian eigenvalue of $G$. Denote by $d(v)$ the vertex degree of $v\in VG$. Then
\[\lambda(G)\leq \max\{d(v)+m(v)|v\in VG\},\]
where $m(v)=\dfrac{1}{d(v)}\sum_{\{u,v\}\in EG}d(u)$ is the average of degrees for all neighbors of $v$.
\end{lemma}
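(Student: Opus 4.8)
The plan is to reduce the bound on the largest Laplacian eigenvalue to a weighted row-sum estimate for a cleverly rescaled matrix. The obstacle to applying a Gershgorin-type disk bound directly to $L(G)=D(G)-A(G)$ is that $L(G)$ has negative off-diagonal entries, so the naive disk centered at $d(v)$ has radius $\sum_{u\sim v}1=d(v)$, yielding only the crude bound $\lambda(G)\le 2\max_v d(v)$. The whole point is to rescale the coordinates so that the \emph{neighbor} degrees surface in the radius.

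First I would fix a positive diagonal matrix $S=\mathrm{diag}(s_v)_{v\in VG}$ and pass to the similar matrix $B=S^{-1}L(G)S$, which has the same spectrum as $L(G)$ since similarity preserves eigenvalues. Its entries are $B_{vv}=d(v)$ and $B_{vu}=-s_u/s_v$ when $\{u,v\}\in EG$ (and $0$ otherwise). Applying Gershgorin's disk theorem to $B$, every eigenvalue lies in some disk centered at $d(v)$ of radius $R_v=\sum_{u\sim v}s_u/s_v$; since $L(G)$ is symmetric its eigenvalues are real, so in particular the largest satisfies $\lambda(G)\le\max_v\{d(v)+R_v\}$.

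Second, the heart of the argument is the choice $s_v=d(v)$, which is legitimate once we discard isolated vertices (they only contribute the eigenvalue $0$ and cannot attain the maximum). With this weighting $R_v=\sum_{u\sim v}d(u)/d(v)=m(v)$ by the very definition of $m(v)$, so the Gershgorin estimate becomes exactly $\lambda(G)\le\max_v\{d(v)+m(v)\}$, which is the claim.

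An equivalent route, closer to the classical argument of Merris, is to first show $\lambda(G)\le\rho(Q)$, where $Q=D(G)+A(G)$ is the (entrywise nonnegative) signless Laplacian: using $x^{\top}L(G)x=\sum_{\{u,v\}\in EG}(x_u-x_v)^2$ together with $(x_u-x_v)^2\le(|x_u|+|x_v|)^2$ on a top eigenvector of $L(G)$, a Rayleigh-quotient comparison gives $\lambda(G)\le\lambda_{\max}(Q)=\rho(Q)$. One then bounds $\rho(Q)$ by the weighted row-sum inequality $\rho(Q)\le\max_v (Qy)_v/y_v$, valid for any positive vector $y$, and again the choice $y_v=d(v)$ gives $(Qy)_v=d(v)^2+\sum_{u\sim v}d(u)$, hence $(Qy)_v/y_v=d(v)+m(v)$. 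I expect the main obstacle to be isolating the correct degree-sequence weighting (and, in the second route, the passage from $L(G)$ to the nonnegative matrix $Q$); once that is in hand, both verifications are routine.
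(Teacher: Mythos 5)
The paper does not actually prove this lemma: it is imported verbatim from Merris \cite{kn:R.Merris} with only a citation, so there is no in-paper argument to compare against. Your proof is correct and self-contained, and both of your routes are legitimate. The first (conjugate $L(G)$ by $S=\mathrm{diag}(d(v))$ and apply Gershgorin, using that the spectrum is real to turn the disk into the one-sided bound $\lambda(G)\le d(v)+m(v)$) works once isolated vertices are discarded so that $S$ is invertible, which you do; the second, passing to the signless Laplacian $Q=D(G)+A(G)$ via $(x_u-x_v)^2\le(|x_u|+|x_v|)^2$ and then bounding $\rho(Q)$ by the weighted row sums $(Qy)_v/y_v$ with $y=d$, is essentially Merris's original argument. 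The only point worth stating explicitly is the degenerate case: $m(v)$ is undefined at isolated vertices and the right-hand side is meaningless for an edgeless graph, so the lemma should be read under the standing hypothesis that $G$ has at least one edge (as in Lemma \ref{min}); your reduction to the subgraph of non-isolated vertices, which carries all nonzero Laplacian eigenvalues, disposes of this. No gaps otherwise.
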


\begin{lemma}\label{min}{\rm\cite{kn:Merris94}} Let $\lambda(G)$ and $\Delta(G)$ be the maximum Laplacian eigenvalue and the maximum vertex degree of $G$, respectively. If $G$ has at least one edge, then $\lambda(G)\geq \Delta(G)+1$. Moreover, if $G$ is connected graph of $n$ vertices with $n>1$, then we have
\begin{eqnarray*}
% \nonumber to remove numbering (before each equation)
\lambda(G)=\Delta(G)+1\Longleftrightarrow\Delta(G)=n-1.
\end{eqnarray*}
\end{lemma}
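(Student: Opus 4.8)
The plan is to combine a single well-chosen Rayleigh test vector with the join (product) structure. Throughout I would use $\lambda(G)=\max_{x\ne 0}(x^{\top}L(G)x)/(x^{\top}x)$ together with the edge expansion $x^{\top}L(G)x=\sum_{\{i,j\}\in EG}(x_i-x_j)^2$, and fix a vertex $v$ of maximum degree $\Delta=\Delta(G)$ with neighbourhood $N(v)=\{u_1,\dots,u_{\Delta}\}$. The inequality and the forward direction of the equivalence both come out of one explicit computation; the reverse direction uses the product decomposition and a standard upper bound.

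For $\lambda(G)\ge\Delta+1$, the idea is to plant the largest-eigenvalue eigenvector of the star $K_{1,\Delta}$ at $v$: set $x_v=-\Delta$, $x_{u_i}=1$ for each neighbour, and $x_w=0$ otherwise. Each of the $\Delta$ edges at $v$ contributes $(x_v-x_{u_i})^2=(\Delta+1)^2$, edges inside $N(v)$ contribute $0$, each edge from a neighbour to a vertex outside $\{v\}\cup N(v)$ contributes $1$, and all remaining edges contribute $0$. Writing $e_1\ge 0$ for the number of edges leaving $N(v)$ into the complement of $\{v\}\cup N(v)$, the computation gives $x^{\top}L(G)x=\Delta(\Delta+1)^2+e_1$ and $x^{\top}x=\Delta(\Delta+1)$, whence
\[
\lambda(G)\ \ge\ \frac{x^{\top}L(G)x}{x^{\top}x}\ =\ (\Delta+1)+\frac{e_1}{\Delta(\Delta+1)}\ \ge\ \Delta+1 .
\]
Since $G$ has an edge we have $\Delta\ge 1$, so the denominator is nonzero and the bound is legitimate.

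This same display yields the forward implication. Assuming $G$ connected, $n>1$, and $\lambda(G)=\Delta+1$, the displayed lower bound forces $(\Delta+1)+e_1/(\Delta(\Delta+1))\le\Delta+1$, hence $e_1=0$; then no edge leaves $\{v\}\cup N(v)$, so this set is a union of components, and connectivity forces $\{v\}\cup N(v)=VG$, i.e. $\Delta=n-1$. For the reverse implication, if $\Delta=n-1$ then $v$ is universal and $G=K_1\times H$ with $H=G\setminus v$ on $n-1$ vertices, so Lemma \ref{product graphs} makes $n$ a Laplacian eigenvalue, giving $\lambda(G)\ge n=\Delta+1$; it remains only to show $\lambda(G)\le n$, and then $\lambda(G)=\Delta+1$. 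The main obstacle is precisely this upper bound $\lambda(G)\le n$, which is not among the lemmas quoted above. I would supply the standard one-line complement argument: from $L(G)+L(\overline G)=nI-J$ and the positive semidefiniteness of $L(\overline G)$, restricting to the hyperplane $\mathbf 1^{\perp}$ (which contains the top eigenvector of $L(G)$) gives $x^{\top}L(G)x\le n\,x^{\top}x$, so $\lambda(G)\le n$. Everything else is the explicit Rayleigh computation, whose only real content is correctly bookkeeping the edges leaving the closed neighbourhood of $v$.
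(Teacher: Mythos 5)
Your proof is correct, but note that the paper does not prove this lemma at all: it is quoted verbatim from Merris's survey \cite{kn:Merris94} and used as a black box, so there is no ``paper's proof'' to match. Your argument is a sound, self-contained replacement. The Rayleigh computation with the planted star eigenvector $x_v=-\Delta$, $x_{u_i}=1$, $x_w=0$ is bookkept correctly: $x^{\top}L(G)x=\Delta(\Delta+1)^2+e_1$ and $x^{\top}x=\Delta(\Delta+1)$, with $\Delta\ge 1$ guaranteed by the existence of an edge, which gives both the inequality $\lambda(G)\ge\Delta+1$ and, in the equality case, $e_1=0$; since $v$ has no neighbours outside $\{v\}\cup N(v)$ either, connectivity then forces $\{v\}\cup N(v)=VG$, i.e.\ $\Delta=n-1$. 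The converse correctly splits into $\lambda(G)\ge n$ (via the universal vertex, the factorization $G=K_1\times(G\setminus v)$, and Lemma \ref{product graphs} --- or equivalently Lemma \ref{product laplacian}) and the standard complement bound $\lambda(G)\le n$ from $L(G)+L(\overline{G})=nI-J$ restricted to $\mathbf{1}^{\perp}$, which you rightly identify as the one ingredient not already available among the paper's quoted lemmas. The only cosmetic remark is that the lower bound $\lambda(G)\ge\Delta+1$ can be obtained even more quickly by noting that $L(G)-L(K_{1,\Delta}\cup (n-\Delta-1)K_1)$ is itself a graph Laplacian, hence positive semidefinite, so $\lambda(G)\ge\lambda(K_{1,\Delta})=\Delta+1$; your explicit computation is the same idea unwound, and it has the advantage of exposing the slack term $e_1/(\Delta(\Delta+1))$ that drives the equality analysis.
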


\section{Laplacian spectral characterization of the products of  trees and complete graphs}\label{1111}
In this section, the main result states that the products of $L$-DS trees and complete graphs are $L$-DS graphs. To prove this result, we first need one number theoretic proposition.

\begin{proposition}\label{fact1}Let $s$ and $t$ be two positive integers. If $x_{0},x_{1},\ldots, x_{k}$ is a sequence of
integers with $\sum_{i=0}^{k}x_{i}=t$ and $x_{i}\geq s$ for all $i$, then we have
\begin{equation}\label{inequality1}
    \sum_{i=0}^{k}x_{i}^{2}\leq (t-ks)^{2}+ks^2,
\end{equation}
where the equality of {\mbox{\rm(\ref{inequality1})}} holds if and only if all $x_i$ are identically $s$ but one equals to $t-ks$.
\end{proposition}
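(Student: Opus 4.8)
The plan is to strip away the lower-bound constraint by a linear shift, after which the inequality collapses to a one-line elementary fact. Concretely, I would set $y_i = x_i - s$ for each $i$; since $x_i \geq s$, every $y_i$ is a nonnegative integer, and the constraint $\sum_{i=0}^{k} x_i = t$ becomes $\sum_{i=0}^{k} y_i = t-(k+1)s$. Write $E := t-(k+1)s$, and note $E \geq 0$ is precisely the condition for such a sequence to exist at all.

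Next I would re-express both sides of (\ref{inequality1}) in terms of the $y_i$. On the left, $\sum_i x_i^2 = \sum_i (y_i+s)^2 = \sum_i y_i^2 + 2s\sum_i y_i + (k+1)s^2 = \sum_i y_i^2 + 2sE + (k+1)s^2$. On the right, since $t-ks = E+s$, one gets $(t-ks)^2 + ks^2 = (E+s)^2 + ks^2 = E^2 + 2sE + (k+1)s^2$. Cancelling the common terms $2sE + (k+1)s^2$, the claimed bound (\ref{inequality1}) is seen to be equivalent to
\[
\sum_{i=0}^{k} y_i^2 \;\leq\; E^2 \;=\; \Big(\sum_{i=0}^{k} y_i\Big)^2 .
\]

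This last inequality is immediate from nonnegativity of the $y_i$: expanding the square gives $\big(\sum_i y_i\big)^2 = \sum_i y_i^2 + 2\sum_{i<j} y_i y_j$, and each cross term $y_i y_j$ is $\geq 0$. This settles (\ref{inequality1}). The same line also yields the equality characterization: equality forces $\sum_{i<j} y_i y_j = 0$, i.e.\ at most one of the $y_i$ is nonzero. Translating back, all but (at most) one of the $x_i$ equal $s$, and the exceptional index $j$, when present, satisfies $y_j = E$, hence $x_j = s + E = t - ks$. When $E = 0$ all $x_i = s$, which is consistent with the stated description since then $t - ks = s$ as well.

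I do not expect a genuine obstacle: the entire content lies in choosing the right normalization, and the only point warranting care is tracking the equality condition through the substitution, including the degenerate case $t = (k+1)s$ in which the exceptional entry $t-ks$ coincides with $s$ (so the description still applies rather than failing). An alternative route is a smoothing argument -- repeatedly replacing a pair $(x_i, x_j)$ with $x_i \geq x_j > s$ by $(x_i+1,\, x_j-1)$, which strictly increases $\sum_i x_i^2$ while preserving the hypotheses -- but the shift reduces everything to a standard inequality and disposes of the equality analysis far more cleanly.
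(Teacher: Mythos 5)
Your proof is correct, and it takes a genuinely different route from the paper. The paper proves Proposition \ref{fact1} by induction on $k$: it applies the induction hypothesis to $x_0,\dots,x_{k-1}$, rewrites the resulting bound by completing the square in $x_k$, and then uses the range $s\le x_k\le t-ks$ to bound that quadratic; the equality analysis then requires tracking when two separate inequalities, (\ref{inequality2}) and (\ref{inequality3}), are tight simultaneously. Your substitution $y_i=x_i-s$ eliminates the induction entirely: after cancellation the claim becomes $\sum_i y_i^2\le\bigl(\sum_i y_i\bigr)^2$ for nonnegative $y_i$, which follows from expanding the square, and the equality case reads off immediately as ``at most one $y_i$ is nonzero.'' Your algebra checks out ($t-ks=E+s$ with $E=t-(k+1)s$, and both sides shed the common $2sE+(k+1)s^2$), and you correctly handle the degenerate case $E=0$, where the exceptional entry $t-ks$ coincides with $s$. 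What your approach buys is brevity and a transparent equality characterization; the paper's induction is longer and its equality bookkeeping more delicate, though both arguments are elementary and establish the same statement. One stylistic point: nothing in your argument uses integrality of the $x_i$, so your proof in fact gives the statement for real $x_i\ge s$, which is slightly more general than needed here.
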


\begin{proof}
We shall use induction on $k$ to prove this result. It is obvious when $k=0$. For $k\ge 1$, $\sum_{i=0}^{k}x_{i}=t$ implies that $\sum_{i=0}^{k-1}x_{i}=t-x_k$. By the induction hypothesis, we obtain that
\begin{eqnarray}
\sum_{i=0}^{k}x_{i}^{2} &\leq & x_k^2+(t-x_k-(k-1)s)^{2}+(k-1)s^2\label{inequality2}\\
&=& \frac{1}{2}[2x_k-t+(k-1)s]^2+\frac{1}{2}[t-(k-1)s]^2+(k-1)s^2.\label{inequality22}
\end{eqnarray}
Notice that $\sum_{i=0}^{k}x_i=t$ and $x_i\ge s$ for all $i$. Then we have $s\le x_k\le t-ks$. Thus,
\begin{equation}\label{inequality3} -t+(k+1)s\le 2x_k-t+(k-1)s\le t-(k+1)s.\end{equation}
Applying (\ref{inequality3}) to (\ref{inequality22}), we can obtain that
\begin{eqnarray*}
\sum_{i=0}^{k}x_{i}^{2}\le \frac{1}{2}[t-(k+1)s]^2+\frac{1}{2}[t-(k-1)s]^2+(k-1)s^2=(t-ks)^{2}+ks^2.
\end{eqnarray*}
Note that the equality of (\ref{inequality1}) holds if and only if the equalities of both (\ref{inequality2}) and (\ref{inequality3}) hold simultaneously. Clearly, the equality of (\ref{inequality3}) holds if and only if $x_k=s$ or $t-ks$. If $x_k=t-ks$, then it is easy to obtain that $x_i=s$ for $i\le k-1$, since $\sum_{i=0}^{k}x_i=t$ and $x_i\ge s$ for all $i$. Meanwhile, the equality of (\ref{inequality2}) holds in this case. If $x_k=s$, by the induction hypothesis, the equality of (\ref{inequality2}) holds if and only if all $x_i$ are identically $s$ but one for all $i\le k-1$. This completes the proof.
\end{proof}

\begin{lemma}\label{lemma2}
If a tree $T$ is $L$-DS, so is the product $T\times K_{1}$.
\end{lemma}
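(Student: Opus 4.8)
The plan is to start from an arbitrary graph $H$ with $\mathrm{Spec}(H)=\mathrm{Spec}(T\times K_1)$ and show $H=T\times K_1$. First I would record what the spectrum gives us. Writing $n=|VT|$ and $\mathrm{Spec}(T)=\{\mu_1,\dots,\mu_{n-1},0\}$, Lemma \ref{product laplacian} yields
\[
\mathrm{Spec}(T\times K_1)=\{\,n+1,\;1+\mu_1,\dots,1+\mu_{n-1},\;0\,\},
\]
so by Lemma \ref{Laplacian spectrum} the graph $H$ has $n+1$ vertices, $2n-1$ edges, is connected (the eigenvalue $0$ is simple), and has $n+1$ as its largest Laplacian eigenvalue. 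Since $n+1=|VH|$ is an eigenvalue of the connected graph $H$, Lemma \ref{product graphs} forces $H$ to be a product; grouping the factors I may write $H=H_1\times H_2$ and set $p=|VH_1|\ge q=|VH_2|\ge 1$ with $p+q=n+1$.

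The crux, and the main obstacle, is to prove $q=1$, i.e. that one factor is a single (universal) vertex matching the apex of $T\times K_1$. I would establish this by counting. From $|EH|=|EH_1|+|EH_2|+pq=2n-1=2(p+q)-3$ and $|EH_1|+|EH_2|\ge 0$ we get $pq\le 2p+2q-3$, that is $(p-2)(q-2)\le 1$. With $p\ge q\ge 1$ this leaves only $q\in\{1,2\}$ together with the sporadic solution $p=q=3$ (so $n=5$). The sporadic case forces $|EH_1|=|EH_2|=0$ and hence $H=\overline{K_3}\times\overline{K_3}=K_{3,3}$; comparing spectra would then require $\mathrm{Spec}(T)=\{2,2,2,2,0\}$, whence $s(T)=2^4/5\notin\mathbb{Z}$, contradicting $s(T)=1$ (Lemma \ref{spanning tree}). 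For $q=2$ the edge count gives $|EH_1|+|EH_2|=1$, leaving only the two shapes $H=\overline{K_{n-1}}\times K_2$ and $H=(K_2+(n-3)K_1)\times\overline{K_2}$; in each I would compute the spectrum explicitly via Lemma \ref{product laplacian} and check that cospectrality with $T\times K_1$ forces $T$ to have the Laplacian spectrum of a star, in which case $H$ turns out to be isomorphic to $T\times K_1$ after all. Thus in every surviving subcase either $q=1$ or $H\cong T\times K_1$ directly.

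Once $q=1$ we have $H=H_1\times K_1$ with $|VH_1|=n$, and Lemma \ref{product laplacian} gives $\mathrm{Spec}(H_1\times K_1)=\{n+1,1+\alpha_1,\dots,1+\alpha_{n-1},0\}$ where $\mathrm{Spec}(H_1)=\{\alpha_1,\dots,\alpha_{n-1},0\}$. Matching this with $\mathrm{Spec}(T\times K_1)$ and cancelling the common $n+1$ and $0$ forces $\{1+\alpha_i\}=\{1+\mu_i\}$, hence $\mathrm{Spec}(H_1)=\mathrm{Spec}(T)$. Since $T$ is $L$-DS this yields $H_1=T$, and therefore $H=H_1\times K_1=T\times K_1$, as required.

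I expect the genuine difficulty to be concentrated entirely in the middle step, ruling out $q\ge 2$. The inequality $(p-2)(q-2)\le 1$ is what makes the problem finite, and the hypothesis ``$T$ is a tree'' enters precisely through $s(T)=1$, which is what eliminates the remaining join decompositions; the hypothesis ``$T$ is $L$-DS'' is used only at the very end. A slightly slicker alternative for the crux would be to note that every nonzero Laplacian eigenvalue of $H_1\times H_2$ is at least $\min(p,q)$, while the smallest nonzero eigenvalue of $T\times K_1$ equals $1+a(T)\le 2$ (with $a(T)$ the algebraic connectivity of $T$), immediately giving $q\le 2$; but this leans on the external bound $a(T)\le 1$, whereas the counting argument stays within the lemmas already available.
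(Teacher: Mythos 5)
Your proposal is correct and follows essentially the same route as the paper: decompose the cospectral graph as a join via Lemma \ref{product graphs}, use the edge count to force the smaller factor to have at most $2$ vertices (plus the sporadic $3K_1\times 3K_1$ case), eliminate the surviving configurations with the spanning-tree count $s(T)=1$, and finish with the $L$-DS hypothesis as in Lemma \ref{lemma1}. The only cosmetic difference is that for the subcase where the edge lies in the two-vertex factor the paper simply regroups $K_2=K_1\times K_1$ and applies Lemma \ref{lemma1} directly, whereas you compute the spectrum and identify $T$ as a star; both work.
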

\begin{proof}
To prove $T\times K_1$ is $L$-DS, assume that $G$ is a graph $L$-cospectral to $T\times K_{1}$. We need to prove that $G$ is isomorphic to $T\times K_1$. If $|VT|=n$, by Lemma \ref{Laplacian spectrum}, $G$ is a connected graph with $|VG|=n+1$. By Lemmas \ref{product laplacian} and \ref{product graphs}, $G$ can be written as the product of two graphs, then say $G=G_1\times G_2$. Fix the following notations,
\begin{eqnarray*}
v_1=|VG_{1}|,~~e_{1}=|EG_{1}|,~~e_{2}=|EG_{2}|.
\end{eqnarray*}
Without loss of generality, we assume $|VG|\ge 2|VG_1|$, i.e., $n+1\geq 2v_1$. Counting the edges of both $G$ and $T\times  K_{1}$ and applying Lemma \ref{Laplacian spectrum}, we obtain $e_1+e_2+v_1(n+1-v_1)=2n-1$. It follows that
\begin{equation}\label{edgessum}
e_{1}+e_{2}=(2-v_1)n+v_1^2-v_1-1.
\end{equation}
From Lemma \ref{lemma1}, we only need to show that $v_1=1$, viz. $G=K_1\times G_2$. Now suppose $v_1\geq 2$. Applying $n+1\geq 2v_1$ and $v_1\ge 2$ to (\ref{edgessum}), we have
\begin{equation}\label{edgesumequality}
e_{1}+e_{2}\leq (2-v_1)(2v_1-1)+v_1^{2}-v_1-1=-(v_1-1)(v_1-3).
\end{equation}
Note that $e_1+e_2\ge 0$. It forces $v_1=2$ or $3$. Then our proof will be complete with the following cases.\vspace{2mm}

\noindent\emph{Case 1}. $v_1=2$. Equation (\ref{edgessum}) implies $e_{1}+e_{2}=1$. Then we have $e_{1}=1$ or $e_{1}=0$.\vspace{2mm}

\noindent\emph{Case 1.1}. $e_{1}=1$. Since $v_1=2$, it is easily seen that $G_1= K_2=K_1\times K_1$.  It follows that $G=G_{1}\times G_2= K_{1}\times (K_{1}\times G_2)$. Since $G$ is $L$-cospectral to $T\times K_1$, applying Lemma \ref{lemma1}, we have $G= T\times K_1$.\vspace{2mm}

\noindent\emph{Case 1.2}. $e_{1}=0$. Applying $v_1=2$ and $e_1+e_2=1$, we can easily obtain that $G_1= 2K_1$ and $G_2=(n-3)K_1+P_2$. Since $G=G_1\times G_2$, by routine calculations, we have  $\mathrm{Spec}(G_2)=\{2,[0]^{n-2}\}$. Applying Lemma \ref{product laplacian}, we have
\begin{eqnarray*}
% \nonumber to remove numbering (before each equation)
\mathrm{Spec}(G)=\{n+1,n-1,4,[2]^{n-3},0\}.
\end{eqnarray*}
Since $\mathrm{Spec}(T\times K_1)=\mathrm{Spec}(G)$, by Lemma \ref{product laplacian}, the Laplacian spectrum of $T$ is
\begin{eqnarray*}
% \nonumber to remove numbering (before each equation)
 \mathrm{Spec}(T)=\{n-2,3,[1]^{n-3},0\}.
\end{eqnarray*}
By Lemma \ref{spanning tree}, the number of spanning trees of $T$ is given by $s(T)=\frac{3(n-2)}{n}$. But obviously $s(T)=1$. It follows that $n=3$. Hence, $G_2= P_2$, and then $G= 2K_1\times P_2= K_{1}\times  P_{3}$. Now we can complete this case easily by applying Lemma \ref{lemma1}.\vspace{2mm}

\noindent\emph{Case 2}. $v_1=3$. Equation (\ref{edgesumequality}) implies $e_{1}=e_{2}=0$. Applying $v_1=3$ and $e_{1}=e_{2}=0$ to (\ref{edgessum}), we can obtain $n=5$. It follows that $G_1=3K_1$ and $G_2=3K_1$, and then $G=3K_1\times 3K_1$. Its Laplacian spectrum is $\{6,[3]^4,0\}$. Since $\mathrm{Spec}(T\times K_1)=\mathrm{Spec}(G)$, by Lemma \ref{product laplacian}, the Laplacian spectrum of $T$ is
$\{[2]^4,0\}$. Apply Lemma \ref{spanning tree}, we have $s(T)=\frac{16}{5}$, which is a contradiction.
\end{proof}

\begin{theorem}\label{treekm}
If a tree $T$ is $L$-DS, so is the product $T\times K_m$ for all positive integers $m$.
\end{theorem}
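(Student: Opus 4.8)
The plan is to induct on $m$, with the base case $m=1$ being exactly Lemma \ref{lemma2}. So assume the result for $m-1$, i.e.\ that $T\times K_{m-1}$ is $L$-DS, and let $G$ be any graph $L$-cospectral with $T\times K_m$; we must show $G=T\times K_m$. The decisive observation is that $T\times K_m=(T\times K_{m-1})\times K_1$. Hence, once we know that $G$ possesses a $K_1$ factor, say $G=K_1\times G'$, we get $\mathrm{Spec}(G'\times K_1)=\mathrm{Spec}(G)=\mathrm{Spec}\big((T\times K_{m-1})\times K_1\big)$, and Lemma \ref{lemma1} applied to the $L$-DS graph $T\times K_{m-1}$ forces $G'=T\times K_{m-1}$, whence $G=K_1\times(T\times K_{m-1})=T\times K_m$. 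Thus the whole inductive step collapses to one structural claim: \emph{every graph $G$ that is $L$-cospectral with $T\times K_m$ has a universal vertex, equivalently a $K_1$ factor.}

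To prove this claim I would first fix the product structure of $G$. By Lemma \ref{Laplacian spectrum}, $G$ is connected on $n+m$ vertices and the multiplicity $q$ of the eigenvalue $n+m$ agrees with that of $T\times K_m$; by Lemma \ref{product graphs} this makes $G$ a product of exactly $q+1$ join-irreducible factors $G=G_1\times\cdots\times G_{q+1}$ with $n_i=|VG_i|\ge 1$ and $\sum_i n_i=n+m$. Reading $\mathrm{Spec}(T\times K_m)$ off Lemma \ref{product laplacian}, the eigenvalue $n+m$ has multiplicity $m$ plus the number of Laplacian eigenvalues of $T$ equal to $n$; since a tree has $n$ as an eigenvalue (equivalently, is a nontrivial product, by Lemma \ref{product graphs}) precisely when it is a star, we get $q=m$ for a non-star tree and $q=m+1$ for $T=K_{1,n-1}$. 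Suppose now, for contradiction, that no factor is $K_1$, i.e.\ every $n_i\ge 2$. Counting edges via Lemma \ref{Laplacian spectrum} gives $\sum_i e_i=E-\tfrac12(n+m)^2+\tfrac12\sum_i n_i^2$ with $E=(n-1)+\binom{m}{2}+nm$, so $\sum_i e_i\ge 0$ yields $\sum_i n_i^2\ge (n+m)^2-2E$. On the other hand Proposition \ref{fact1}, with $s=2$ and $k+1=q+1$ terms, gives $\sum_i n_i^2\le (n+m-2q)^2+4q$. For $q=m$ the two bounds combine to the linear inequality $2n(m-1)\le m^2+3m-2$, i.e.\ $n\le\frac{m^2+3m-2}{2(m-1)}$, whereas $n_i\ge 2$ over all $m+1$ factors forces $n\ge m+2$. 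These are incompatible for $m\ge 3$, so some factor must be $K_1$; the star case $q=m+1$ runs identically and is in fact strictly contradictory for every $m\ge 2$.

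The remaining obstacle, and the genuinely delicate part, is the boundary where the two estimates meet. For $m=2$ one is left only with $n=4$, and the equality clause of Proposition \ref{fact1} then forces every factor to be $2K_1$, so $G=2K_1\times 2K_1\times 2K_1=K_{2,2,2}$, while the sole relevant non-star tree on four vertices is $T=P_4$. Here I would argue exactly as in Cases 1.2 and 2 of Lemma \ref{lemma2}: invoke Lemma \ref{spanning tree} together with the spectral invariance of the spanning-tree count (Lemma \ref{Laplacian spectrum}), computing $s(K_{2,2,2})=384\ne 336=s(P_4\times K_2)$ to conclude that $K_{2,2,2}$ is not in fact $L$-cospectral with $T\times K_2$, so this configuration never arises. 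The same spanning-tree bookkeeping disposes of the finitely many small exceptional configurations that survive the inequality, including the equality cases and the low-order star products. I expect this spanning-tree casework at the boundary, together with carefully accounting for the extra factor in the star case, to be the main technical burden; the inductive reduction and the edge-count estimate built on Proposition \ref{fact1} form the clean structural backbone.
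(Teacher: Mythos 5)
Your proposal is correct and follows essentially the same route as the paper: induction on $m$, an edge count combined with Proposition \ref{fact1} (with $s=2$) to force a $K_1$ factor, and a spanning-tree count to kill the residual case $m=2$, $n=4$, $G=2K_1\times 2K_1\times 2K_1$. The only (cosmetic) differences are that the paper sidesteps your star/non-star case split by writing $G$ as a product of exactly $m+1$ not-necessarily-irreducible factors, and at the boundary it extracts $\mathrm{Spec}(T)=\{[2]^3,0\}$ and contradicts $s(T)=1$ rather than comparing $s(K_{2,2,2})$ with $s(P_4\times K_2)$ directly.
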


\begin{proof}Suppose the graph $G$ is $L$-cospectral to $T\times K_m$. We shall use induction on $m$ to show that $G= T\times K_m$. The case $m=1$ is stated in Lemma \ref{lemma2}. Now we assume $m\ge 2$. Note that
\[T\times K_{m}=T\times \underbrace{K_1\times \cdots\times  K_1}_m.\]
Since $\mathrm{Spec}(G)=\mathrm{Spec}(T\times K_m)$, by Lemma \ref{product graphs}, $G$ is the product of $m+1$ graphs, denoted
\[G=G_{0}\times G_{1}\times \cdots\times G_{m}.\]
Fix notations as follows,
\begin{equation}\label{notation}n=|VT|,~~e_{i}=|EG_{i}|,~~v_{i}=|VG_{i}| \text{~~~~for~~}i=0,1,\dots,m.\end{equation}
Without loss of generality, assume $v_{0}\geq v_{1}\geq\cdots\geq v_{m}$. It is obvious that $\sum_{i=0}^{m}v_i=n+m$ by Lemma \ref{Laplacian spectrum}. In the following, we are going to prove $v_{m}=1$ by contradiction. Now suppose $v_{m}\geq 2$. It follows that $v_i\ge 2$ for all $i=0,\ldots,m$. Then we have $m+n=\sum_{i=0}^{m}v_i\ge 2(m+1)$, so $n\ge m+2$. For convenience, we list those conclusions as follows,
\begin{equation}\label{eq5}
m\ge 2,~~v_0\ge\cdots\ge v_m\ge 2,~~m+n=\sum_{i=0}^{m}v_i,~~n\ge m+2.
\end{equation}
 Combining $v_0\ge\cdots\ge v_m\ge 2$ with $\sum_{i=0}^{m}v_i=n+m$, by Proposition \ref{fact1}, we have
\begin{equation}\label{eq4}
\sum_{i=0}^{m}v_i^2\le (n-m)^2+4m.
\end{equation}
Since $\mathrm{Spec}(G)=\mathrm{Spec}(T\times K_m)$, Lemma \ref{Laplacian spectrum} implies that $G$ and $T\times K_m$ have the same number of edges. Counting the edges of both $G$ and $T\times K_{m}$, we have
\begin{equation}\label{numberedges}
    \sum_{i=0}^{m}e_{i}+\sum_{0\leq i<j\leq m}v_{i}v_{j}=n-1+mn+\frac{m(m-1)}{2}.
\end{equation}
Since $\sum_{i=0}^{m}v_i=n+m$, we have
\begin{equation}\label{sincess}
    \sum_{0\leq i<j\leq m}v_{i}v_{j}=\frac{1}{2}\left(\left(\sum_{i=0}^{m}v_{i}\right)^{2}-\sum_{i=0}^{m}v_{i}^{2}\right)
=\frac{1}{2}\left(\left(n+m\right)^{2}-\sum_{i=0}^{m}v_{i}^{2}\right),
\end{equation}
Applying (\ref{sincess}) to (\ref{numberedges}), we obtain
\begin{eqnarray}\label{sumes}
\sum_{i=0}^{m}e_{i}=\frac{1}{2}\left(\sum_{i=0}^{m}v_{i}^{2}-n^{2}-m\right)+n-1.
\end{eqnarray}
Applying (\ref{eq4}) to (\ref{sumes}), we have
\begin{equation}\label{syezss}
   \sum_{i=0}^{m}e_{i}\leq (1-m)n+\frac{1}{2}(m^2+3m)-1.
\end{equation}
Applying $m\geq 2$ and $n\ge m+2$ of (\ref{eq5}) to (\ref{syezss}), we have
\begin{equation}\label{bianfanwei}
    \sum_{i=0}^{m}e_{i}\leq -\frac{1}{2}(m^2-m-2).
\end{equation}
Notice that $-\frac{1}{2}(m^2-m-2)\le 0$ for $m\ge 2$, but $\sum_{i=0}^{m}e_{i}\ge 0$. It follows that
\begin{equation}\label{eq6}
m=2,~~e_i=0  \text{~~~~for~~}  i=0,1,2.
\end{equation}
Combining (\ref{eq6}), (\ref{syezss}), and $n\ge m+2$ of (\ref{eq5}), we obtain $n=4$. So far, we have obtained that $G=G_0\times G_1\times G_2$ satisfies
\[|VG_0|\ge |VG_1|\ge |VG_2|\ge 2,~~|EG_0|=|EG_1|=|EG_2|=0,~\text{~and~}~|VG|=m+n=6.\]
It follows that
\[G=2K_{1}\times  2K_{1} \times 2K_{1}\]
Then we have $\mathrm{Spec}(G)=\{[6]^2,[4]^3,0\}$. Since $\mathrm{Spec}(G)=\mathrm{Spec}(T\times K_m)$, applying Lemma \ref{product laplacian}, we have $\mathrm{Spec}(T)=\{[2]^3,0\}$. By Lemma \ref{spanning tree}, the number of spanning trees of $T$ is $s(T)=2$. Note the fact that $T$ is a tree. It is a contradiction. Now we have shown that $v_{m}=1$, and then $G_{m}=K_{1}$.  From $\mathrm{Spec}(T\times K_m)=\mathrm{Spec}(G)$, we have
\[\mathrm{Spec}(K_1\times (T\times K_{m-1}))=\mathrm{Spec}(K_1\times (G_0\times \cdots\times G_{m-1})).\]
By Lemma \ref{product laplacian}, we have
\[\mathrm{Spec}(T\times K_{m-1})=\mathrm{Spec}(G_0\times \cdots\times G_{m-1}).\]
By the induction hypothesis of $m-1$,
\[T\times K_{m-1}=  G_{0}\times  \cdots \times G_{m-1}.\]
Thus $T\times K_{m}= G_{0}\times \cdots \times  G_{m}=G$. The proof is complete.
\end{proof}

\begin{rem}
Up until now, there are so many trees are proved to be $L$-DS graphs, for examples, path $P_{n}$ \cite{kn:vandam03}, graphs $Z_n$, $T_n$, and $W_n$ \cite{kn:Shen05}, starlike tree $S$ \cite{kn:Omidi07}, etc. Therefore, Theorem \ref{treekm} implies that $P_{n}\times K_{m}$, $Z_n\times K_m$, $T_n\times K_m$, $W_n\times K_m$ and $S\times K_m$ are also $L$-DS graphs.
\end{rem}

\section{Laplacian spectral characterization of the products of unicyclic graphs and  complete graphs}\label{2222}
This section is devoted to the Laplacian spectral characterization of the products of unicyclic graphs and complete graphs. Recall that a unicyclic graph is a connected graph containing exactly one cycle. In other words, a connected graph $G=(VG,EG)$ is unicyclic iff $|VG|=|EG|$.  In notations, we write the unicyclic graph as $U$. For $k\leq n$, denote by $\mathcal{U}(n,k)$ the collection of all unicyclic graphs $U$ with $|VU|=n$ and containing the cycle $C_k$ as a subgraph. Recall in Lemma \ref{2nd max eigenvalue} that given a vertex $v$ of the graph $G$, $d(v)$ denotes the degree of $v$, and $m(v)$ is defined to be $m(v)=\dfrac{1}{d(v)}\sum_{\{u,v\}\in EG}d(u).$
\begin{proposition}\label{8.1} With above notations, for all $U\in \mathcal{U}(n,k)$, we have
\begin{equation}\label{eq7}
\max\{d(v)+m(v)\mid v\in VU\}\leq n-k+3+\frac{2}{n-k+2}.
\end{equation}
The equality of {\rm (\ref{eq7})} holds if and only if $U$ is the graph obtained by appending $n-k$ vertices to a vertex of the cycle $C_k$.
\end{proposition}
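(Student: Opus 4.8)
Write $R=n-k+2$; the plan is to show simultaneously that $R$ is the largest possible degree in $U$ and that $d(v)+m(v)\le R+1+\tfrac{2}{R}$, which is exactly the claimed bound $n-k+3+\tfrac{2}{n-k+2}$. First I would record the degree bounds. If $v$ lies on the cycle $C_k$, then two of its neighbours are its cycle-neighbours and the remaining $d(v)-2$ neighbours lie among the $n-k$ off-cycle vertices, so $d(v)\le n-k+2=R$. If $v$ lies off the cycle, then a second edge from $v$ to $C_k$ would create a second cycle, so at most one neighbour of $v$ lies on $C_k$ and the rest lie among the other $n-k-1$ off-cycle vertices, giving $d(v)\le n-k=R-2$.

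The heart of the argument is a clean formula for the neighbour-degree sum $S(v)=\sum_{u\sim v}d(u)=d(v)\,m(v)$. Writing $x$ for the number of edges inside $N(v)$ and $y$ for the number of edges inside $W=VU\setminus(\{v\}\cup N(v))$, and counting the $n$ edges of $U$ by their position relative to $v$, one obtains $n=d(v)+x+(\text{edges between }N(v)\text{ and }W)+y$ together with $S(v)=d(v)+2x+(\text{edges between }N(v)\text{ and }W)$, and subtracting gives $S(v)=n+x-y$. Since $U$ is unicyclic, an edge inside $N(v)$ forms a triangle with $v$, so $x\le 1$, with $x=1$ only when the unique cycle is a triangle through $v$; in particular $x=0$ whenever $k\ge 4$ or $v$ is off the cycle. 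For $y$ I would lower-bound it by counting how much of $C_k$ is forced into $W$: if $v$ is off the cycle then at least $k-1$ cycle vertices lie in $W$ and contribute at least $k-2$ cycle edges, so $y\ge k-2$; if $v$ is on the cycle with $k\ge 4$ then the $k-3$ cycle vertices other than $v$ and its two cycle-neighbours lie in $W$ and contribute $k-4$ cycle edges, so $y\ge k-4$; and if $v$ is on the cycle with $k=3$ then $W$ contains no cycle vertex and $y\ge 0$.

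Feeding these into $S(v)=n+x-y$ yields $S(v)\le n-k+2=R$ in the off-cycle case and $S(v)\le n-k+4=R+2$ in both on-cycle cases (for $k=3$ the bound $n+1$ coincides with $R+2$). Hence $d(v)+m(v)=d(v)+S(v)/d(v)$ is at most $d(v)+(R+2)/d(v)$ when $v$ is on the cycle and at most $d(v)+R/d(v)$ when $v$ is off it. Both are convex functions of $d(v)$, so on the relevant degree interval each attains its maximum at an endpoint. On the cycle, $d(v)\in[2,R]$, the value at $d(v)=R$ is exactly $R+1+\tfrac{2}{R}$, and the value at $d(v)=2$ is $3+\tfrac{R}{2}\le R+1+\tfrac{2}{R}$ (equivalent to $(R-2)^2\ge 0$); off the cycle, $d(v)\in[1,R-2]$ and a short computation shows both endpoint values are strictly below $R+1+\tfrac{2}{R}$. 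This establishes the inequality.

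For the equality case I would trace back the tight steps. Off-cycle vertices give strict inequality and are ruled out, so equality forces $v$ on the cycle with $d(v)=R=n-k+2$, i.e. $v$ is adjacent to all $n-k$ off-cycle vertices, and $y$ at its minimum, i.e. no tree edge lies in $W$; together these say that every off-cycle vertex is a pendant neighbour of the single cycle vertex $v$, which is precisely the graph obtained by appending $n-k$ vertices to one vertex of $C_k$. Conversely, a direct computation on that graph gives $d(v)+m(v)=R+1+\tfrac{2}{R}$ at the high-degree vertex. I expect the main obstacle to be the bookkeeping in the lower bound for $y$: one must argue carefully, case by case on the position of $v$ and on whether $k=3$, that the cycle forces the stated number of edges into $W$ and that equality pins down exactly the appended-star configuration.
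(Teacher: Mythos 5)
Your proof is correct, and its overall skeleton matches the paper's: both reduce the problem to bounding the neighbour-degree sum $\sum_{u\sim v}d(u)$ by $n-k+4$ and then maximizing the convex function $d+\frac{n-k+4}{d}$ over the admissible degree interval by comparing the two endpoints, where the comparison comes down to $(R-2)^2\ge 0$, i.e.\ $(n-k)^2\ge 0$. Where you differ is in how the central estimate is obtained. The paper fixes $d(v_0)$, bounds the number of off-cycle non-neighbours by $|V_0|\le n-k-d(v_0)+2$, and then argues that the sum is largest when every vertex of $V_0$ is attached to a neighbour of $v_0$, yielding $n-k+4$; this is an extremal-configuration heuristic rather than a watertight inequality. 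You instead prove the exact identity $S(v)=n+x-y$ by partitioning the $n$ edges of the unicyclic graph according to their position relative to $v$, and then bound $x\le 1$ (the triangle case) and $y$ from below by the cycle edges forced into $W$, split according to whether $v$ is on the cycle and whether $k=3$. This buys you a fully rigorous derivation of the same bound, a sharper bound $S(v)\le n-k+2$ for off-cycle vertices which the paper never isolates, and a cleaner equality analysis: $d(v)=n-k+2$ together with $y$ at its minimum pins down the appended-star configuration directly. The one point worth adding is the degenerate case $n=k$ (so $R=2$ and $U=C_k$), where the two endpoints of your convex function coincide and equality does hold; this is consistent with the stated equality condition, since appending $n-k=0$ vertices returns $C_k$ itself, but you should say so explicitly rather than leave it buried in the inequality $(R-2)^2\ge 0$.
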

\begin{proof}
Since $U\in \mathcal{U}(n,k)$ contains the cycle $C_k$ as a subgraph, then $|VU\setminus VC_k|=n-k$. It is easily seen that the maximum vertex degree of $U$ is $n-k+2$, viz.
\begin{equation}\label{eq8}
d(v)\le n-k+2\text{~~~~for all~~}  v\in VU.
\end{equation}
Given $v_0\in VU$, we shall prove (\ref{eq7}) by studying the following cases of $d(v_0)$.\vspace{2mm}

\noindent\emph{Case 1}. $d(v_0)=1$.   Clearly, $v_0\notin VC_k$ and there is a unique vertex adjacent to $v_0$, denoted $v\in VU$. By (\ref{eq8}), we have $d(v)\leq n-k+2$. Thus
\[d(v_0)+m(v_0)=d(v_0)+d(v)\leq n-k+3.\]

\noindent\emph{Case 2}. $n-k+2\ge d(v_0)\ge 2$.   To prove (\ref{eq7}), viz. to find the maximum value $m(v_0)$ for $v_0$ with fixed $d(v_0)$, it is enough to find the maximum value of the sum
\begin{equation}\label{sum}
\sum_{\{v,v_0\}\in EG}d(v).
\end{equation}
Note that $U$ is unicyclic with the cycle $C_k$. Consider the following vertex set
\[V_0=\{u\in VU\setminus VC_k\mid u\; \mbox{is not adjacent to}\; v_0 \}.\]
Since $U$ is unicyclic, $v_0$ has at most two neighbors in $VC_k$. And $v_0$ has two neighbors in $VC_k$ occurs only when $v_0\in VC_k$. It implies that
\begin{equation}\label{V_0}
|V_0|\le n-k-d(v_0)+2.
\end{equation}
In order to make the sum (\ref{sum}) as large as possible, assume that all vertices of $V_0$ are adjacent to neighbors of $v_0$. Now, the sum (\ref{sum}) equals \[n-k-d(v_0)+2+d(v_0)+2=n-k+4.\]
Clearly, this is the maximum value for the sum (\ref{sum}).
Thus, in general, we have
\[\sum_{\{v,v_0\}\in EU}d(v)\leq n-k+4.\]
It follows that
\[d(v_0)+m(v_0)\leq d(v_0)+\frac{n-k+4}{d(v_0)}.\]
Now we are going to find an upper bound of $d(v_0)+\frac{n-k+4}{d(v_0)}$ with $2\le d(v_0)\le n-k+2$. Note that the maximum value of $d(v_0)+\frac{n-k+4}{d(v_0)}$ occurs only when $d(v_0)=2$ or $n-k+2$. On the other hand, to compare these two values, we have
\begin{eqnarray*}
\left(n-k+2+\frac{n-k+4}{n-k+2}\right)-\left(2+\frac{n-k+4}{2}\right)=\frac{n-k+2}{2}+\frac{2}{n-k+2}-2\geq 0.
\end{eqnarray*}
It is easily seen that $d(v_0)+\frac{n-k+4}{d(v_0)}$ is maximum iff $d(v_0)=n-k+2$. Hence,
\[d(v_0)+m(v_0)\leq n-k+3+\frac{2}{n-k+2},\]
where the equality holds iff $d(v_0)=n-k+2$. Note that $d(v_0)=n-k+2$ implies that $V_0=\emptyset$ by (\ref{V_0}). This completes the proof.
\end{proof}
\begin{lemma}\label{unicyclic lemma} If a unicyclic graph $U$ is $L$-DS and $U\neq C_{6}$, then $U\times K_{1}$ is $L$-DS. Moreover, $C_{6}\times K_{1}$ is $L$-cospectral to $2K_{1}\times  (2P_{2}+ K_{1})$, see Figure \ref{F1}.
\end{lemma}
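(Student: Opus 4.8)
The plan is to mirror the structure of the tree case (Lemma~\ref{lemma2}), adapting the edge count to a unicyclic graph. Assume $G$ is $L$-cospectral to $U\times K_1$ where $|VU|=n$. By Lemma~\ref{Laplacian spectrum}, $G$ is connected with $n+1$ vertices, and by Lemmas~\ref{product laplacian} and~\ref{product graphs} we may write $G=G_1\times G_2$. Fixing $v_1=|VG_1|$, $e_1=|EG_1|$, $e_2=|EG_2|$ and assuming $n+1\ge 2v_1$, I would count edges: since $U$ is unicyclic $|EU|=n$, so $U\times K_1$ has $n+(n+1)=2n+1$ edges, giving
\begin{equation*}
e_1+e_2+v_1(n+1-v_1)=2n+1,\qquad\text{hence}\qquad e_1+e_2=(2-v_1)n+v_1^2-v_1+1.
\end{equation*}
By Lemma~\ref{lemma1} it suffices to force $v_1=1$. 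Supposing $v_1\ge 2$ and applying $n+1\ge 2v_1$ gives an upper bound $e_1+e_2\le -(v_1-1)(v_1-3)+2$, which together with $e_1+e_2\ge 0$ confines $v_1$ to a short list of small values (roughly $v_1\in\{2,3,4\}$); these I would dispatch one at a time.

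For each surviving $v_1$ the argument is the same as in the tree case: reconstruct $G_1,G_2$ from the constraints on $v_1,e_1,e_2$, compute $\mathrm{Spec}(G)$ via Lemma~\ref{product laplacian}, peel off the $K_1$ factor to recover $\mathrm{Spec}(U)$, and then test it against the spanning-tree count from Lemma~\ref{spanning tree}. Since $U$ is unicyclic we know a priori that $s(U)=k$, where $k$ is the length of its unique cycle, so any candidate spectrum yielding a non-integer or otherwise impossible value of $s(U)$ is eliminated; the genuinely surviving branches should collapse to $G=K_1\times G_2$ via Lemma~\ref{lemma1}. The novelty compared with the tree case is precisely that $s(U)\ge 3$ rather than $s(U)=1$, so some branch will \emph{not} be contradicted — that surviving branch is where $U=C_6$ enters, and it is what produces the exceptional $L$-cospectral mate.

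The main obstacle, and the part deserving real care, is isolating the $C_6$ exception cleanly. When the reconstructed $\mathrm{Spec}(U)$ is consistent with $U$ being a cycle, I expect to land on $\mathrm{Spec}(C_6)=\{[3]^2,[1]^2,0,\dots\}$ (the Laplacian eigenvalues $2-2\cos(2\pi j/6)$), whose spanning-tree count is exactly $6$, matching a genuine unicyclic graph. Here the hypothesis $U\neq C_6$ is used to discard this branch, and to verify that the cospectral partner is the specific graph $2K_1\times(2P_2+K_1)$ I would compute $\mathrm{Spec}(2P_2+K_1)=\{[2]^2,[0]^3\}$ and then apply Lemma~\ref{product laplacian} with the $2K_1$ factor, checking it agrees eigenvalue-for-eigenvalue with $\mathrm{Spec}(C_6\times K_1)$ obtained by the same lemma from $\mathrm{Spec}(C_6)$. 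I would also confirm $2K_1\times(2P_2+K_1)\neq C_6\times K_1$ as graphs (e.g.\ by degree sequence), which establishes that $C_6\times K_1$ is genuinely not $L$-DS and justifies the exclusion. The remaining small-$v_1$ cases should be pure bookkeeping, exactly parallel to Lemma~\ref{lemma2}.
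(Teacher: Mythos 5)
Your overall strategy is the right one and matches the paper's, but there is a concrete arithmetic error at the very first step that derails everything downstream. The graph $U\times K_1$ has $|EU|+|VU|\cdot|VK_1| = n + n = 2n$ edges, not $n+(n+1)=2n+1$: the number of cross edges is $|VU|\cdot|VK_1|=n$, and you appear to have used the vertex count of the product instead. Consequently your identity should read $e_1+e_2=(2-v_1)n+v_1^2-v_1$ (so $e_1+e_2=2$ when $v_1=2$, and $e_1+e_2=6-n$ when $v_1=3$), whereas the equation you wrote, $e_1+e_2=(2-v_1)n+v_1^2-v_1+1$, is the one that arises for \emph{bicyclic} graphs. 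With your count the $v_1=2$, $e_1=0$ branch forces $e_2=3$ and you would enumerate the wrong candidate graphs $G_2$ (those with three edges, e.g.\ $P_4+(n-5)K_1$, $C_3+(n-4)K_1$, etc.) instead of the correct pair $P_3+(n-4)K_1$ and $2P_2+(n-5)K_1$; in particular the $C_6$ exception would not emerge where you expect it. The correct bound is $e_1+e_2\le -(v_1-2)^2+2$, which again gives $v_1\in\{2,3\}$.

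The second, more substantive gap is your claim that each surviving branch can be killed by testing the reconstructed $\mathrm{Spec}(U)$ against the spanning-tree count alone. That is not enough. For example, in the branch $G=2K_1\times(P_3+(n-4)K_1)$ one gets $s(U)=\tfrac{8(n-2)}{n}$, hence $n\in\{4,8,16\}$; for $n=8$ this gives $s(U)=6$, which is a perfectly legitimate value for a unicyclic graph on $8$ vertices containing $C_6$, so no contradiction is available from $s(U)$ or integrality. The paper eliminates these cases by a different mechanism: it proves a sharp upper bound on $\max\{d(v)+m(v)\}$ over $U\in\mathcal{U}(n,k)$ (Proposition~\ref{8.1}) and combines it with Merris's bound $\lambda(U)\le\max\{d(v)+m(v)\}$ (Lemma~\ref{2nd max eigenvalue}) to contradict the largest reconstructed eigenvalue $\lambda(U)=n-2$. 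Your outline never invokes, and does not anticipate needing, any such eigenvalue-versus-degree estimate, so several branches (e.g.\ $n=8,16$ and $n=9,18$ in the two $e_1=0$ subcases) would remain open. The parts of your sketch concerning the $C_6$ mate itself (the spectra of $C_6\times K_1$ and $2K_1\times(2P_2+K_1)$ both equal $\{7,5,[4]^2,[2]^2,0\}$, and the graphs are non-isomorphic) are correct.
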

\begin{proof} The idea of the proof is almost the same as Lemma \ref{lemma2}. Similarly, assume that $G$ is a graph $L$-cospectral to $U\times K_{1}$. We shall determine the condition, under which $G$ is isomorphic to $U\times K_1$. Let $|VU|=n$, by Lemma \ref{Laplacian spectrum}, then $G$ is a connected graph with $|VG|=n+1$. By Lemmas \ref{product laplacian} and \ref{product graphs}, $G$ can be written as the product of two graphs $G_1$ and $G_2$, i.e., $G=G_1\times G_2$. Fix the following notations,
\[v_1=|VG_1|,~~ e_{1}=|EG_{1}|, ~~ e_{2}=|EG_{2}|.\]
Without loss of generality, we assume $|VG|\ge 2|VG_1|$, i.e., $n+1\geq 2v_1$. Counting the edges of both $G$ and $U\times K_{1}$ and applying Lemma \ref{Laplacian spectrum}, we obtain $e_1+e_2+v_1(n+1-v_1)=2n$. It follows that
\begin{equation}\label{edgessum1}e_{1}+e_{2}=(2-v_1)n+v_1^2-v_1. \end{equation}
From Lemma \ref{lemma1}, it would be enough if we obtain $v_1=1$, viz. $G=K_1\times G_2$. Now suppose $v_1\geq 2$. Applying $n+1\geq 2v_1$ and $v_1\ge 2$ to (\ref{edgessum1}), we have
\begin{equation}\label{edgesumequality1}
e_{1}+e_{2}\leq (2-v_1)(2v_1-1)+v_1^{2}-v_1=-(v_1-2)^2+2.
\end{equation}
Notice the fact $e_1+e_2\ge 0$. It forces $v_1=2$ or $3$. Then our proof will be complete with the following cases.\vspace{2mm}

\noindent\emph{Case 1}. $v_1=2$.   Applying $v_1=2$ to (\ref{edgessum1}), we have $e_1+e_2=2$. Notice that $v_1=|VG_1|=2$ implies $e_1=|EG_1|\le 1$. Then $e_1=1$ or $0$.\vspace{2mm}

\noindent\emph{Case 1.1}. $e_{1}=1$.   Since $v_1=2$, it is clear that $G_1=K_2$. Then we have
\[G= K_{2}\times G_{2}=K_{1}\times (K_{1}\times G_{2}).\]
Since $ \mathrm{Spec}(U\times K_1)= \mathrm{Spec}(G)$ and $U$ is $L$-DS, by Lemma \ref{lemma1}, we obtain that $K_{1}\times G_{2}$ and $U$ are isomorphic. Clearly, $G= U\times K_1$.\vspace{2mm}

\noindent\emph{Case 1.2}. $e_{1}=0$.   It is clear that $G_1=2K_1$. Since $e_1+e_2=2$, then we have $e_2=|EG_2|=2$. Depending on two edges of $G_{2}$ either adjacent or not, $G_{2}$ may be isomorphic to $P_{3}+(n-4)K_{1}$ or $2P_{2}+(n-5)K_{1}$. Thus, we have
\[G= 2K_1\times \left(P_{3}+(n-4)K_{1}\right),  \text{~or~}  G= 2K_1\times \left(2P_{2}+(n-5)K_{1}\right).\]

\noindent\emph{Case 1.2.1}. $G= 2K_1\times \left(P_{3}+(n-4)K_{1}\right)$.   Since $ \mathrm{Spec}(P_3)=\{3,1,0\}$, applying Lemma \ref{product laplacian}, we have
\[ \mathrm{Spec}(G)=\{n+1,n-1,5,3,[2]^{n-4},0\}.\]
Since $ \mathrm{Spec}(G)= \mathrm{Spec}(U\times K_1)$, applying Lemma \ref{product laplacian} again, we obtain
\[ \mathrm{Spec}(U)=\{n-2,4,2,[1]^{n-4},0\}.\] By
Lemma \ref{spanning tree}, the number of the spanning trees of $U$
is $s(U)=\frac{8(n-2)}{n}$. It forces $n=4,8$ or $16$.\vspace{2mm}

\noindent\emph{Case 1.2.1.1}. $n=4$.   Clearly, we have $G= 2K_1\times P_{3}$. Notice that the unicyclic graph $U$ has $s(U)=\frac{8(n-2)}{n}=4$ spanning trees. It follows that the cycle of $U$ is $C_4$. But $|VU|=n=4$, then $U=C_4$.  On the other hand, it is easily seen that $2K_1\times P_3= K_1\times C_4$, viz. $G= K_1\times U$ in this case.\vspace{2mm}

\noindent\emph{Case 1.2.1.2}. $n=8$.   Since  $s(U)=\frac{8(n-2)}{n}=6$ and $U$ is unicyclic, then the cycle $C_{6}$ is a subgraph of $U$, i.e., $U\in \mathcal{U}(8,6)$. By Proposition \ref{8.1}, we have $d(v)+m(v)\leq 5.5$ for all $v\in VU$. Notice that the maximum Laplacian eigenvalue of $U$ is $\lambda(U)=n-2=6$. It is a contradiction by Lemma \ref{2nd max eigenvalue}.\vspace{2mm}

\noindent\emph{Case 1.2.1.3}. $n=16$.   Similar as \emph{Case 1.2.1.2}, we have $s(U)=7$ and $U\in\mathcal{U}(16,7)$. By Proposition \ref{8.1}, $d(v)+m(v)\leq12+\frac{2}{11}$ for all $v\in VU$. So it is a contradiction by Lemma \ref{2nd max eigenvalue} since the maximum Laplacian of $U$ is $\lambda(U)=n-2=14$ for the current case.\vspace{2mm}

\noindent\emph{Case 1.2.2}. $G= 2K_1\times \left(2P_{2}+(n-5)K_{1}\right)$.   Similar as \emph{Case 1.2.1}, applying Lemma \ref{product laplacian}, we obtain that
\[\mathrm{Spec}(G)=\{n+1,n-1,[4]^2,[2]^{n-4},0\}, \text{~and~}  \mathrm{Spec}(U)=\{n-2,[3]^2,[1]^{n-4},0\}.\]
It follows that the number of spanning trees of $U$ is $s(U)=\frac{9(n-2)}{n}$, so $n=6$, $9$, or $18$.\vspace{2mm}

\noindent\emph{Case 1.2.2.1}. $n=6$.   Similar as \emph{Case 1.2.1.2}, we have $U\in \mathcal{U}(6,6)$, which implies $U$ is
exactly the cycle $C_{6}$. By routine calculations, we can check that
\[\mathrm{Spec}(K_{1}\times  C_{6})=\mathrm{Spec}(2K_{1}\times (2P_{2}+ K_{1})).\]
But it is easily seen that $K_1\times C_6$ and $2K_{1}\times (2P_{2}+ K_{1})$ are not isomorphic, see Figure \ref{F1}. \vspace{2mm}

\noindent\emph{Case 1.2.2.2}. $n=9$.  Similar as \emph{Case 1.2.1.2}, we have $U\in\mathcal{U}(9,7)$, and then $d(v)+m(v)\leq 5.5$ for all $v\in VU$. But $\lambda(U)=7$ in this case, so it is a contradiction by Lemma \ref{2nd max eigenvalue}.\vspace{2mm}

\noindent\emph{Case 1.2.2.3}. $n=18$.   The arguments in this case is also similar as \emph{Case 1.2.1.2}. It is a contradiction by Lemma \ref{2nd max eigenvalue} since $U\in \mathcal{U}(18,8)$ and $m(v)+d(v)\leq 13+\frac{1}{6}$ for all $v\in VU$, but $\lambda(U)=16$.\vspace{2mm}

\noindent\emph{Case 2}. $v_1=3$.   Applying $v_1=3$ to (\ref{edgessum1}), we have $e_1+e_2=6-n$. Using the fact $e_1+e_2\ge 0$, then $n\leq 6$. Notice that $|VG|=n+1\geq 2|VG_1|=6$ implies $n\geq 5$. Thus, $n=5$ or $6$.
\vspace{2mm}

\noindent\emph{Case 2.1}. $n=6$.   Applying $v_1=3$ and $n=6$ to (\ref{edgessum1}), we can obtain $e_{1}+e_{2}=0$, viz. $e_1=e_2=0$.
Then we have $G=3K_{1}\times 4K_{1}$, whose Laplacian spectrum is $ \mathrm{Spec}(G)=\{7,[4]^{2},[3]^3,0\}$. Since $ \mathrm{Spec}(G)= \mathrm{Spec}(U\times K_1)$, applying Lemma \ref{product laplacian}, we have $ \mathrm{Spec}(U)=\{[3]^2,[2]^3,0\}$. Using Lemma \ref{spanning tree}, we have $s(U)=12$. However, $|VU|=n=6$, it follows that $s(U)\le 6$, a contradiction.\vspace{2mm}

\noindent\emph{Case 2.2}. $n=5$.   Our arguments are similar as \emph{Case 2.1}. When $n=5$, we have $e_1+e_2=1$, and then $G= (P_2+K_1)\times 3K_1$. Similar as \emph{Case 2.1}, we can easily obtain $s(U)=\frac{32}{5}$. Note the fact that $s(U)$ is an integer, a contradiction. This completes the proof.
\end{proof}

The following result is obvious from Lemmas \ref{unicyclic lemma} and \ref{product laplacian}. Indeed, $L$-cospectral graphs shown in Figure \ref{F1} have  also been given in \cite{kn:Zhang09}.

\begin{corollary}If $n\ne 6$, then
$C_n\times K_{m}$ is $L$-DS for all $m\geq 1$. Moreover, the $L$-cospectral graph of $C_{6}\times K_{m}$ is $H_1\times K_{m-1}$, where $H_1=2K_{1}\times (2P_{2}+ K_{1})$ as shown in Figure \ref{F1}.
\end{corollary}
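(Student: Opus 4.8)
The statement splits into two independent claims, and the plan is to prove each by reducing it to the $m=1$ facts already available. For the first claim I would begin from the classical fact that every cycle $C_n$ is itself an $L$-DS graph. Granting this, "$C_n\times K_m$ is $L$-DS for $n\neq 6$" is the instance $U=C_n$ of the unicyclic counterpart of Theorem~\ref{treekm}: if an $L$-DS unicyclic graph $U$ satisfies $U\neq C_6$, then $U\times K_m$ is $L$-DS for every $m$. Its base case $m=1$ is exactly Lemma~\ref{unicyclic lemma}, since $n\neq 6$ forces $C_n\neq C_6$. For $m\ge 2$ and $n\ge 5$ I would repeat the induction of Theorem~\ref{treekm} almost verbatim. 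Because $C_n$ with $n\ge 5$ is not a join, $n+m$ is a Laplacian eigenvalue of $C_n\times K_m$ of multiplicity exactly $m$, so by Lemma~\ref{product graphs} any $L$-cospectral $G$ is a join $G_0\times\cdots\times G_m$ of $m+1$ graphs. Writing $v_i=|VG_i|$, $e_i=|EG_i|$ with $v_0\ge\cdots\ge v_m$ and using $\sum_i v_i=n+m$ together with the edge count of Lemma~\ref{Laplacian spectrum}, I would substitute the bound $\sum_i v_i^2\le(n-m)^2+4m$ from Proposition~\ref{fact1} into the edge identity to force some $v_i=1$. The one structural change from the tree case is that $|EC_n|=n$ rather than $n-1$, which shifts every edge total by one. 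The resulting factor $K_1$ is then stripped off by Lemma~\ref{product laplacian}, closing the induction via the hypothesis for $m-1$.

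For the second claim I would use $K_m=K_1\times K_{m-1}$, so that $C_6\times K_m=(C_6\times K_1)\times K_{m-1}$. By Lemma~\ref{unicyclic lemma}, $C_6\times K_1$ and $H_1=2K_1\times(2P_2+K_1)$ are $L$-cospectral graphs of equal order. Because the Laplacian spectrum of a join depends only on the two factor spectra and the two vertex counts (Lemma~\ref{product laplacian}), joining both members of an $L$-cospectral pair of equal order with the fixed graph $K_{m-1}$ preserves $L$-cospectrality; hence $C_6\times K_m$ and $H_1\times K_{m-1}$ are $L$-cospectral. They are not isomorphic: passing to complements turns a join with $K_{m-1}$ into a disjoint union with $(m-1)K_1$, and isolated vertices cancel under disjoint union, so $C_6\times K_m\cong H_1\times K_{m-1}$ would force $C_6\times K_1\cong H_1$, contradicting Lemma~\ref{unicyclic lemma}. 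This shows $C_6\times K_m$ is not $L$-DS and identifies $H_1\times K_{m-1}$ as its cospectral mate.

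I expect the main obstacle to be the elimination, inside the induction, of the finitely many sporadic configurations with all $v_i\ge 2$ that the counting does not immediately discard; for $n\ge 5$ these reduce to complete multipartite joins. Ruling them out proceeds exactly as in the proof of Lemma~\ref{unicyclic lemma}: recover $\mathrm{Spec}(U)$ through Lemma~\ref{product laplacian}, test the integrality of $s(U)$ via Lemma~\ref{spanning tree}, and, where this is inconclusive, contradict the degree bound of Proposition~\ref{8.1} against the eigenvalue bound of Lemma~\ref{2nd max eigenvalue}. The role of $C_6$ is confined to the base case $m=1$, where it is precisely the configuration that survives every one of these tests; that is why it must be excluded from the $L$-DS assertion and instead produces the cospectral mate above. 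Finally, $C_3$ and $C_4$ are themselves joins, so there $n+m$ has multiplicity larger than $m$ and $G$ factors into more than $m+1$ pieces; the edge bookkeeping is then redone with the true factor count read off from the spectrum, the same Proposition~\ref{fact1} estimate again forcing a $K_1$ factor.
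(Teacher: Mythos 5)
Your proposal is correct and follows essentially the same route as the paper: the general-$m$ statement for $n\neq 6$ is the specialization $U=C_n$ of the induction in Theorem \ref{unicyclic} (whose base case is Lemma \ref{unicyclic lemma}), and the cospectral pair for $C_6$ is obtained by joining the $m=1$ pair of Lemma \ref{unicyclic lemma} with $K_{m-1}$ via Lemma \ref{product laplacian}. The paper simply declares the corollary obvious from those two lemmas, so your added details (the complement argument for non-isomorphism, and the remark that $C_3$ and $C_4$ are themselves joins so the factor count must be read off the multiplicity of $n+m$) are correct elaborations of steps the paper leaves implicit rather than a different method.
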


\begin{theorem}\label{unicyclic}
If a unicyclic graph $U$ is $L$-DS and $U\ne C_6$, then the product $U\times K_m$ is $L$-DS for all positive integers $m$.
\end{theorem}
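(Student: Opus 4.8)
The plan is to mirror the induction on $m$ used in Theorem \ref{treekm}, with the base case $m=1$ supplied by Lemma \ref{unicyclic lemma} (the hypothesis $U\ne C_6$ is exactly what that lemma needs). Assuming the result for $m-1$, I would let $G$ be $L$-cospectral to $U\times K_m$ with $m\ge 2$. Since $U\times K_m=U\times K_1\times\cdots\times K_1$, Lemma \ref{product graphs} lets me write $G=G_0\times\cdots\times G_m$ as a product of $m+1$ graphs; set $n=|VU|$, $v_i=|VG_i|$, $e_i=|EG_i|$ and order $v_0\ge\cdots\ge v_m$, so that $\sum_i v_i=n+m$ by Lemma \ref{Laplacian spectrum}. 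The main line of the argument is to show $v_m=1$, after which $G_m=K_1$ can be stripped off: Lemma \ref{product laplacian} then gives $\mathrm{Spec}(U\times K_{m-1})=\mathrm{Spec}(G_0\times\cdots\times G_{m-1})$, and the induction hypothesis yields $G=U\times K_m$. As the unicyclic case will show, however, there is one genuine exception to ``$v_m=1$'' that must be handled separately.

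To push toward $v_m=1$ I would argue by contradiction, assuming $v_m\ge 2$, hence every $v_i\ge 2$ and $n\ge m+2$. The only change from the tree computation is bookkeeping: because $U$ is unicyclic we have $|EU|=n$ rather than $n-1$, so counting the edges of $U\times K_m$ and of $G$ and substituting $\sum_{i<j}v_iv_j=\frac12((n+m)^2-\sum_i v_i^2)$ gives $\sum_i e_i=\frac12(\sum_i v_i^2-n^2-m)+n$, exactly one more than the analogous tree identity \eqref{sumes}. Feeding in the bound $\sum_i v_i^2\le (n-m)^2+4m$ from Proposition \ref{fact1} (with $s=2$) and then $m\ge2$, $n\ge m+2$ yields $\sum_i e_i\le -\frac12(m^2-m-4)$. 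Since $\sum_i e_i\ge 0$, this forces $m=2$, and then $\sum_i e_i\le 5-n$ together with $n\ge m+2$ forces $n\in\{4,5\}$.

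The two surviving configurations are then pinned down explicitly. For $n=5$ the ordering forces $(v_0,v_1,v_2)=(3,2,2)$ with all $e_i=0$, so $G=3K_1\times2K_1\times2K_1$, whose spectrum is $\{[7]^2,[5]^2,[4]^2,0\}$; comparing with $\mathrm{Spec}(U\times K_2)$ through Lemma \ref{product laplacian} gives $\mathrm{Spec}(U)=\{3,3,2,2,0\}$, and then Lemma \ref{spanning tree} yields $s(U)=36/5\notin\mathbb Z$, a contradiction. The delicate case is $n=4$: here $(v_0,v_1,v_2)=(2,2,2)$, but the edge identity now forces $\sum_i e_i=1$, so exactly one factor is $K_2$ and $G=K_2\times2K_1\times2K_1$. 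Unlike the tree case---where the analogous configuration had all $e_i=0$ and produced the contradiction $s(T)=2$---this graph is genuinely realizable: since $2K_1\times2K_1=C_4$ we have $G=C_4\times K_2$, so $\mathrm{Spec}(C_4\times K_2)=\mathrm{Spec}(U\times K_2)$ and Lemma \ref{product laplacian} gives $\mathrm{Spec}(U)=\mathrm{Spec}(C_4)$. Because $U$ is $L$-DS and $C_4$ is $L$-cospectral with it, we conclude $U=C_4$ and hence $G=C_4\times K_2=U\times K_2$ directly---the desired conclusion rather than a contradiction.

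I expect this last case to be the main obstacle. The extra $+1$ in the unicyclic edge count removes the clean ``$\sum_i e_i=0$'' that drove the tree proof, allowing a $K_2$ factor and the honest solution $U=C_4$. The right way to package the inductive step is therefore to prove the dichotomy ``either $v_m=1$, or $G\cong U\times K_m$'' rather than ``$v_m=1$ always'': in the first alternative the induction closes by stripping off $K_1$ as in paragraph one, and in the second we are already finished. Everything else---the edge-counting identity, the application of Proposition \ref{fact1}, and the spanning-tree contradictions---is routine and parallels Section \ref{1111} closely.
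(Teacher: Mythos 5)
Your proposal is correct and follows essentially the same route as the paper: the same induction on $m$, the same edge count $\sum_i e_i=\frac12(\sum_i v_i^2-n^2-m)+n$, the same use of Proposition \ref{fact1} to force $m=2$ and $n\in\{4,5\}$, the spanning-tree contradiction $s(U)=36/5$ for $n=5$, and the direct identification $G=C_4\times K_2=U\times K_2$ for $n=4$. Your observation that the $n=4$ case is not a contradiction but a direct verification (so the inductive step is really the dichotomy ``$v_m=1$ or $G\cong U\times K_m$'') is exactly what happens in the paper's Case 1, just packaged more honestly than the paper's nominal proof by contradiction.
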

\begin{proof} The idea to prove this theorem is similar as the proof of Theorem \ref{treekm}. In the following, we borrow all of arguments and notations ahead of (\ref{numberedges}) in the proof of Theorem \ref{treekm}, except that the tree $T$ is replaced by the unicyclic graph $U$. In the following, we prove the theorem by induction on $m$. Note that the case $m=1$ is Lemma \ref{unicyclic lemma}. Now assume $m\ge 2$.  We are going to prove $v_{m}=1$ by contradiction. Suppose $v_{m}\geq 2$. Since $|EU|=|VU|=n$, instead of (\ref{numberedges}), we have
\begin{equation}\label{eq9}
    \sum_{i=0}^{m}e_{i}+\sum_{0\leq i<j\leq m}v_{i}v_{j}=n+mn+\frac{m(m-1)}{2}.
\end{equation}
Then by the same arguments as Theorem \ref{treekm}, instead of (\ref{sumes}), (\ref{syezss}), and (\ref{bianfanwei}), we have
\begin{eqnarray}
\sum_{i=0}^{m}e_{i}&=&\frac{1}{2}\left(\sum_{i=0}^{m}v_{i}^{2}-n^{2}-m\right)+n;\label{eq10}\\
\sum_{i=0}^{m}e_{i}&\le& (1-m)n+\frac{1}{2}(m^2+3m);\label{eq11}\\
 \sum_{i=0}^{m}e_{i}&\le& -\frac{1}{2}(m^2-m-4).\label{eq12}
\end{eqnarray}
Note that $\sum_{i=0}^{m}e_{i}\ge 0$. Applying the assumption $m\ge 2$ to (\ref{eq12}), we have $m=2$. Then (\ref{eq11}) becomes
\[e_{0}+e_1+e_2\le -n+5.\]
It follows that $n\le 5$. On the other hand, following the arguments of Theorem \ref{treekm}, we also have, similar as (\ref{eq5}), $n\ge m+2=4$. Combining them together, we have $n=4$ or $5$.\vspace{2mm}

\noindent\emph{Case 1}. $n=4$.   It is easily obtained that $v_{0}+v_{1}+v_{2}=m+n=6$. Recall the assumption $v_{0}\geq v_{1}\geq v_{2}\geq2$. It follows that $v_{0}=v_{1}=v_{2}=2$. Now applying $m=2$, $n=4$, and $v_{0}=v_{1}=v_{2}=2$ to (\ref{eq10}), we have $e_{0}+e_{1}+e_{2}=1$. It is easily seen that
\[G=K_{2}\times 2K_{1}\times 2K_{1}= C_{4}\times K_{2}.\]
Since $ \mathrm{Spec}(G)= \mathrm{Spec}(U\times K_2)$, by Lemma \ref{lemma1}, we have $U= C_4$, and then $G= U\times K_2$. \vspace{2mm}

\noindent\emph{Case 2.} $n=5$.   Clearly, $v_{0}+v_{1}+v_{2}=7$. Since $v_{0}\geq v_{1}\geq v_{2}\geq2$, then we have
\[v_{0}=3,~~v_{1}=2,~~v_{2}=2.\]
Applying these to (\ref{eq10}), we obtain $e_{0}=e_{1}=e_{2}=0$. It means that
\[G=3K_{1}\times  2K_{1}\times  2K_{1}.\]
From Lemma \ref{product laplacian}, by routine calculations, we have
\[\mathrm{Spec}(G)=\{[7]^2,[5]^2,[4]^2,0\}.\] Since $\mathrm{Spec}(G)= \mathrm{Spec}(U\times K_2)$, applying Lemma \ref{product laplacian} again, we have \[ \mathrm{Spec}(U)=\{[3]^2,[2]^2,0\}.\]
It is a contradiction since the number of spanning tree of $U$ is $s(U)=\frac{36}{5}$ by Lemma \ref{spanning tree}. So far, what we have obtained is $v_m=1$, i.e., $G_m=K_1$. Since $\mathrm{Spec}(G)=\mathrm{Spec}(U\times K_m)$, namely,
\[\mathrm{Spec}((G_0\times \cdots\times G_{m-1})\times K_1)= \mathrm{Spec}((U\times K_{m-1})\times K_1),\]
by Lemma \ref{product laplacian}, it is easy to obtain that
\[\mathrm{Spec}(G_0\times \cdots\times G_{m-1})= \mathrm{Spec}(U\times K_{m-1}).\]
From the induction hypothesis of $m-1$, we have
\[G_0\times \cdots\times G_{m-1}= U\times K_{m-1}.\]
Obviously, we have $G= U\times K_m$. This completes the proof.
\end{proof}

Up until now, there are only few unicyclic graphs have been proved to be $L$-DS graphs. For example, \emph{lollipop graph}, which is a graph obtained by attaching a pendant vertex of a path to a cycle, and graph $H(n;q,n_1,n_2)$ with order $n$, which contains a cycle $C_q$ and two hanging paths $P_{n_1}$ and $P_{n_2}$ attached at the same vertex of the cycle, are proved to be $L$-DS graph \cite{lollipop,kn:liu11}. Thus we can trivially get the following results.
\begin{corollary}
Let $G$ be the lollipop graph. Then $G\times K_{m}$ is $L$-DS for all positive integers $m$.
\end{corollary}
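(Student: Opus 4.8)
The plan is to reduce the corollary directly to Theorem \ref{unicyclic}, since the lollipop graph is a connected unicyclic graph that is already known to be $L$-DS. First I would recall the definition: a lollipop graph $G$ is obtained by attaching a pendant vertex of a path to a cycle. In particular $G$ is connected and contains exactly one cycle, so it is unicyclic, i.e. $|EG|=|VG|$, and therefore lies precisely in the class of graphs to which Theorem \ref{unicyclic} applies. This is the structural observation that lets the machinery of Section \ref{2222} take over.

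Next I would check the two hypotheses of Theorem \ref{unicyclic}, namely that $G$ is $L$-DS and that $G\neq C_{6}$. The first is exactly the content of the cited results \cite{lollipop,kn:liu11}, so nothing new is needed there. For the second, I would argue from the degree sequence: a lollipop has a pendant path genuinely attached to its cycle, so it possesses at least one vertex of degree $1$ (the free end of the path) and at least one vertex of degree $3$ (the vertex where the path meets the cycle). By contrast $C_{6}$ is $2$-regular. Hence $G$ cannot be isomorphic to any cycle, and in particular $G\neq C_{6}$.

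With both hypotheses in hand, Theorem \ref{unicyclic} applies verbatim and yields that the product $G\times K_{m}$ is $L$-DS for every positive integer $m$, which is the assertion of the corollary. There is essentially no obstacle here: the statement is a formal consequence of the preceding theorem, and the only point demanding even momentary care is the verification that a lollipop is never $C_{6}$, which is immediate from its degree sequence as noted above.
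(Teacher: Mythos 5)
Your proposal is correct and follows exactly the route the paper intends: the corollary is an immediate application of Theorem \ref{unicyclic}, using the cited results \cite{lollipop,kn:liu11} that lollipop graphs are $L$-DS. Your explicit check that a lollipop cannot be $C_{6}$ (via the presence of vertices of degree $1$ and $3$) is a small detail the paper leaves implicit, but it is the right observation and does not change the argument.
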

\begin{corollary}
Let $G=H(n;q,n_1,n_2)$. Then $G\times K_{m}$ is $L$-DS for all positive integers $m$.
\end{corollary}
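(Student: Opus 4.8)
The plan is to deduce this corollary directly from Theorem \ref{unicyclic}, so that the only work required is to verify its three hypotheses for $G = H(n;q,n_1,n_2)$. First I would recall that, by definition, $H(n;q,n_1,n_2)$ is a connected graph built from a single cycle $C_q$ together with two pendant paths $P_{n_1}$ and $P_{n_2}$ attached at a common vertex. Since appending paths to a cycle introduces no new cycles, the resulting graph contains exactly one cycle and is therefore unicyclic; equivalently, $|EG| = |VG| = n$. This places $G$ squarely within the scope of Theorem \ref{unicyclic}.

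Next I would invoke the cited results \cite{lollipop,kn:liu11}, which establish that every graph of the form $H(n;q,n_1,n_2)$ is $L$-DS. The only remaining hypothesis of Theorem \ref{unicyclic} to check is the exclusion $G \ne C_6$, and this is immediate: the vertex of the cycle at which the two paths are attached already has degree $2$ inside $C_q$, and each attached path contributes at least one further incident edge, so this vertex has degree at least $3$. Since $C_6$ is $2$-regular, $G$ cannot be isomorphic to $C_6$.

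With all three hypotheses verified, namely that $G$ is unicyclic, $L$-DS, and distinct from $C_6$, Theorem \ref{unicyclic} applies verbatim and yields that $G \times K_m$ is $L$-DS for every positive integer $m$. There is no genuine obstacle in this argument; the entire content of the corollary is the observation that the known $L$-DS unicyclic family $H(n;q,n_1,n_2)$ satisfies the premises of the main theorem of this section, so the result follows as a one-line specialization.
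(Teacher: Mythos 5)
Your proposal is correct and matches the paper's approach exactly: the paper derives this corollary as an immediate consequence of Theorem \ref{unicyclic}, citing \cite{lollipop,kn:liu11} for the fact that $H(n;q,n_1,n_2)$ is $L$-DS, and your verification that $G$ is unicyclic and cannot be $C_6$ (via the degree-$\ge 3$ attachment vertex) fills in the only details the paper leaves implicit.
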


\section{Laplacian spectral characterization of the products of bicyclic graphs and  complete graphs}\label{3333}
Recall that a bicyclic graph is a connected graph with two independent cycles. Strictly speaking, a connected graph $G=(VG,EG)$ is called bicyclic if $|EG|=|VG|+1$. From now on, we shall denote by $B$ the bicyclic graph to distinguish it from other graphs. This section is devoted to the study of Laplacian spectral characterization of the products of bicyclic graphs and complete graphs. The main result is that the products $B\times K_m$ are $L$-DS for all $L$-DS bicyclic graphs $B$ but one, $B=\Theta_{3,2,5}$, see Figure \ref{F2}. Before proceeding, we need some preparations.

Let $G_1=(VG_1, EG_1)$ and $G_2=(VG_2, EG_2)$ be two connected graphs. The  \emph{union} of $G_1$ and $G_2$ is defined to be
\[G_1\cup G_2=(VG_1\cup VG_2, EG_1\cup EG_2),\]
and the \emph{intersection} of $G_1$ and $G_2$ is defined to be
\[G_1\cap G_2=(VG_1\cap VG_2, EG_1\cap EG_2).\]
Let $C_r$ and $C_s$ be two cycles. If $C_r\cap C_s$ is a path $P_t$ of size $t\ge1$, then the graph union $C_r\cup C_s$ is denoted by $\Theta_{r,t,s}$. If $C_r\cap C_s$ is the empty graph, but $C_r$ and $C_s$ is connected by a path, then the union of $C_r$ and $C_s$, together with the path between them, is denoted by {\bf $\Theta_{r,0,s}$}. The subscription $t$ and $0$ represent the size of the intersection of two cycles. Graphs $\Theta_{r,t,s}$ for all $0\le t\le r\le s$ are called \emph{$\Theta$-graphs}. Informally speaking, the $\Theta$-graph is a bicyclic graph, either consisting of two cycles whose intersection is a path, or obtained by appending two disjoint cycles to two ends of a path. Note that the graph $\Theta_{r,t,s}$ with $2\le t\le r\le s$, are the same as graphs $\Theta_{r,r-t+2,r+s-2t+2}$ and $\Theta_{s,s-t+2,r+s-2t+2}$. For example, $\Theta_{3,2,4}=\Theta_{4,4,5}=\Theta_{3,3,5}$. To avoid this situation, we set $r,s\ge 2(t-1)$ for $t\ge 2$. Then we assume that
\begin{equation}\label{F9.2}
s\ge r \ge 2t-2\text{~~~~for~~} t\ge 0.
\end{equation}
 Clearly, a graph $B$ is bicyclic iff $B$ contains exactly one $\Theta$-graph. Denote by $\mathcal{B}(n,r,t,s)$ the collection of bicyclic graphs $B$ with $\Theta_{r,t,s}$ as a subgraph and $|VB|=n$. For $B\in \mathcal{B}(n,r,t,s)$, since \[|VB|\ge|V\Theta_{r,t,s}|\ge r+s-t\text{~~~~for~~} t\ge 0,\]
 then we have
\begin{equation}\label{F9.1}
n\ge r+s-t.
\end{equation}

Now, we give one proposition which will play an important rule in this section.

\begin{proposition}\label{9.2} Let $B\in \mathcal{B}(n,r,t,s)$ be a bicyclic graph, and denote $\alpha(B)$ the following number
\[\alpha(B)=\max\{d(v)+m(v)\mid v\in VB\}.\]
\noindent\emph{(1)} If $t=0$, then
\begin{equation}\label{F21}s(B)=rs,~~d(v)\leq n-r-s+3,~~\alpha(B)=n-r-s+4+\frac{4}{n-r-s+3}.\end{equation}

\noindent\emph{(2)} If $t=1$, then
\begin{equation}\label{F22}s(B)=rs,~~ d(v)\leq n-r-s+5,~~\alpha(B)=n-r-s+6+\frac{4}{n-r-s+5}.\end{equation}

\noindent\emph{(3)} If $t=2$, then
\begin{equation}\label{F23}s(B)=rs-1,~~d(v)\leq n-r-s+5,~~\alpha(B)= n-r-s+6+\frac{4}{n-r-s+5}.\end{equation}

\noindent\emph{(4)} If $t\ge 3$, then
\begin{equation}\label{F24}s(B)=rs-(t-1)^2,~~d(v)\leq n+t-r-s+3,~~\alpha(B)=n+t-r-s+4+\frac{3}{n+t-r-s+3}.\end{equation}
\end{proposition}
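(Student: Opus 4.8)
The plan is to handle the three quantities separately, in each case reducing to the underlying $\Theta$-subgraph and using that $B\in\mathcal{B}(n,r,t,s)$ is $\Theta_{r,t,s}$ with trees attached at its vertices. For $s(B)$, every edge outside the unique $\Theta$-subgraph is a bridge and so lies in each spanning tree; hence $s(B)=s(\Theta_{r,t,s})$ and it suffices to count spanning trees of $\Theta_{r,t,s}$. For $t=0$ and $t=1$ the cycles $C_r,C_s$ are edge-disjoint, a spanning tree is obtained by deleting exactly one edge from each, so $s(B)=rs$. For $t\ge 2$, $\Theta_{r,t,s}$ is a theta-graph built from three internally disjoint paths between its two junction vertices, of edge-lengths $t-1$, $r-t+1$, $s-t+1$; a spanning tree keeps one path and deletes a single edge from each of the other two, so $s(B)$ equals the sum of the pairwise products of these lengths, which simplifies to $rs-(t-1)^2$ (and to $rs-1$ when $t=2$). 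This gives the spanning-tree entries of (\ref{F21})--(\ref{F24}).

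For the degree bound I would count vertices: by (\ref{F9.1}) the subgraph $\Theta_{r,t,s}$ has $r+s-t$ vertices, so $B$ carries $n-(r+s-t)$ further vertices in attached trees. Every $v$ satisfies $d(v)\le d_{\Theta}(v)+\big(n-(r+s-t)\big)$, with $d_{\Theta}(v)$ its degree inside $\Theta_{r,t,s}$, and equality is forced by hanging all extra vertices as pendants at a vertex of maximal $\Theta$-degree. That maximum is $4$ at the unique shared vertex when $t=1$ and $3$ at a cycle-junction when $t\in\{0,2\}$ or $t\ge 3$; substituting yields the degree bounds in (\ref{F21})--(\ref{F24}).

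The core of the proposition, which feeds Lemma \ref{2nd max eigenvalue} exactly as Proposition \ref{8.1} does in the unicyclic case, is the value of $\alpha(B)=\max_v\{d(v)+m(v)\}$ (understood, as there, as a sharp upper bound over the family). Writing $\sigma(v)=\sum_{\{u,v\}\in EB}d(u)$ so that $d(v)+m(v)=d(v)+\sigma(v)/d(v)$, pendant vertices give at most $1+\Delta$, which is dominated, where $\Delta$ is the maximal degree from the previous step. For $d(v)\ge 2$ the decisive step is a uniform estimate $\sigma(v)\le \Delta+c_t$ by a constant, with $c_t=4$ for $t\le 2$ and $c_t=3$ for $t\ge 3$. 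Granting it, $d(v)+\sigma(v)/d(v)\le d(v)+(\Delta+c_t)/d(v)$ is convex in $d(v)$ and hence maximal at an endpoint of $[2,\Delta]$; a direct comparison shows the endpoint $d(v)=\Delta$ wins, and the value there is $\Delta+1+c_t/\Delta$, which is precisely the stated $\alpha(B)$ in each case. Sharpness comes from the same extremal graph as for the degree: all extra vertices pendant at a junction, where $c_t$ records whether that junction has a neighbour of degree $3$ (true for $t\le 2$, via the adjacent junction or the shared edge) or only degree-$2$ neighbours (true for $t\ge 3$, since the shared path then has interior vertices).

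The main obstacle is the uniform inequality $\sigma(v)\le \Delta+c_t$. Because $B$ is bicyclic, $|EB|=n+1$ and the subgraph on the closed neighbourhood $N[v]$ has cycle rank at most $2$; counting edges gives $\sigma(v)=(n+1)+e(N(v))-e(\overline{N[v]})$ with $e(N(v))\le 2$, so the estimate amounts to showing that $C_r$ and $C_s$ force a linear-in-$(r,s)$ number of edges to lie wholly outside $N[v]$, since $v$ and its neighbours can meet each cycle only in a short arc. I expect this forced-far-edge count to require a brief case analysis on the position of $v$ relative to $\Theta_{r,t,s}$ and on the value of $t$, the genuinely delicate cases being the small cycles $r=3$ or $s=3$, where triangles make $e(N(v))>0$ and must be absorbed into the bound. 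Once the constant is pinned down in each of the four $t$-regimes, the remaining optimization is routine.
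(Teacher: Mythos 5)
Your treatment of $s(B)$ and of the degree bound is correct and in fact more uniform than the paper's, which only writes out the case $t=0$: the three-path decomposition of the theta-subgraph with edge-lengths $t-1$, $r-t+1$, $s-t+1$ gives $s(B)=ab+ac+bc=rs-(t-1)^2$ cleanly in all cases, and the count $d(v)\le d_{\Theta}(v)+(n-|V\Theta_{r,t,s}|)$ with $\max d_\Theta=4$ for $t=1$ and $3$ otherwise reproduces the stated bounds. You also correctly read the ``$=$'' in the $\alpha(B)$ claims as a sharp upper bound over $\mathcal{B}(n,r,t,s)$, which is how the paper uses the proposition later.

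The gap is in the $\alpha(B)$ part, and it is twofold. First, the entire argument hinges on the uniform estimate $\sigma(v)\le\Delta+c_t$ (with $c_t=4$ for $t\le2$ and $c_t=3$ for $t\ge3$), and you do not prove it: you reduce it to ``showing that $C_r$ and $C_s$ force a linear-in-$(r,s)$ number of edges to lie wholly outside $N[v]$'' and defer the case analysis, flagging $r=3$ or $s=3$ as delicate. That inequality \emph{is} the content of the proposition --- it is the bicyclic analogue of the bound $\sum_{\{u,v_0\}\in EU}d(u)\le n-k+4$ that Proposition \ref{8.1} establishes via the set $V_0$ and the observation that a vertex off the cycle can be adjacent to at most one neighbour of $v_0$ --- so leaving it as an expected outcome of an unperformed case analysis leaves the proof incomplete. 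Second, your optimization step contains an actual error: with the crude bound $\sigma(v)\le\Delta+c_t$, the convex function $d+(\Delta+c_t)/d$ on $[2,\Delta]$ is maximal at $d=\Delta$ iff $(\Delta-2)(\Delta-c_t)\ge0$, which \emph{fails} when $c_t=4$ and $\Delta=3$. This occurs for $t=0$ with $n=r+s$ and for $t=2$ with $n=r+s-2$ (i.e.\ $B=\Theta_{r,t,s}$ itself), where the $d=2$ endpoint gives $2+\frac{7}{2}=5.5$, exceeding the claimed $\alpha(B)=\frac{16}{3}$; the proposition survives only because a degree-$2$ vertex there actually has $\sigma(v)\le6$, not $7$, so the boundary case needs a separate, sharper estimate rather than the blanket claim that ``the endpoint $d(v)=\Delta$ wins.'' (The paper's own published proof, for comparison, handles only $t=0$ and argues directly that the extremal configuration pins $v_0$ at a junction with all surplus vertices in $N[v_0]$, forcing $n(v_0)=n-r-s+7$ at $d(v_0)=\Delta$; it is informal, but it does not pass through the problematic uniform bound at $d(v)=2$.)
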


\begin{proof}In the following, we just give a detailed proof for the case $t=0$. The other cases will be followed by similar arguments. \vspace{2mm}

Assume $B\in \mathcal{B}(n,r,0,s)$. $B$ contains $\Theta_{r,0,s}$, and then two disjoint cycles $C_r$ and $C_s$. Each spanning tree of $B$ can be obtained by removing two edges from $B$, that is, one from $EC_r$ and the other from $EC_s$. It implies that the number of spanning trees of $B$ is $s(B)=rs$.\vspace{2mm}

To prove $d(v)\leq n-r-s+3$ for all $v\in VB$, we denote by $\Theta_{r,0,s}^\prime$ the $\Theta$-graph $\Theta_{r,0,s}$, whose two cycles $C_r$ and $C_s$ are connected by the path $P_2$. It is clear that any $\Theta_{r,0,s}$ contains at least $r+s$ vertices. Note that $|V\Theta_{r,0,s}|\ge r+s$ and the equality holds iff $\Theta_{r,0,s}=\Theta_{r,0,s}^\prime$. Then we have
\[|VB\setminus V\Theta_{r,0,s}|=|VB|-|V\Theta_{r,0,s}|\le n-r-s.\]
In order to make the maximum vertex degree of $B$ is $n-r-s+3$, $B$ must be obtained from $\Theta_{r,0,s}^\prime$ by attaching $n-r-s$ vertices to a vertex with degree $3$ in $\Theta_{r,0,s}^\prime$. Clearly, this is the maximum case.\vspace{2mm}

Now we are ready to prove the last equality of (\ref{F21}). If $t=0$, to make $m(v)+d(v)$ maximum, it is clearly required that $B$ contains $\Theta^\prime_{r,0,s}$ as a subgraph. Suppose $\alpha(B)=d(v_0)+m(v_0)$ for some $v_0\in VB$. This easily implies that $v_0$ must be an end of the path $P_2$ connecting two cycles of $\Theta^\prime_{r,0,s}$. Then we have $3\le d(v_0)\le n-r-s+3$. Denote by $n(v)=\sum_{\{u,v\}\in EG}d(u)$ the total degree of neighbors of $v$. Then we have
\[\alpha(B)=\max\left\{d(v_0)+\frac{n(v_0)}{d(v_0)}\mid 3\le d(v_0)\le n-r-s+3\right\}.\]
To make $d(v_0)+\frac{n(v_0)}{d(v_0)}$ as large as possible, all vertices in $VB\setminus V\Theta_{r,0,s}^\prime$ are, either incident with $v_0$, or incident with neighbors of $v_0$. It follows that $n(v_0)$ is a constant $n-r-s+7$. By some arithmetic calculations, we can obtain that, for $d(v_0)=n-r-s+3$,
\[\alpha(B)=n-r-s+3+\frac{n-r-s+7}{n-r-s+3}=n-r-s+4+\frac{4}{n-r-s+3}.\]
This completes the proof of case $t=0$.
\end{proof}

\begin{lemma}\label{bicyclic lemma}
If a bicyclic graph $B$ is $L$-DS and $B\neq \Theta_{3,2,5}$, then $B\times K_{1}$ is $L$-DS. Furthermore, $\Theta_{3,2,5}\times K_{1}$ is $L$-cospectral to $2K_{1}\times  (P_{4}+K_{1})$, see Figure \rm{\ref{F2}}.
\end{lemma}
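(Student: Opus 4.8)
The plan is to mirror the structure of Lemma \ref{lemma2} and Lemma \ref{unicyclic lemma}, since a bicyclic graph is characterized by $|EB|=|VB|+1$. First I would assume $G$ is a graph $L$-cospectral to $B\times K_1$, set $|VB|=n$, and conclude via Lemma \ref{Laplacian spectrum} that $G$ is connected with $|VG|=n+1$. By Lemmas \ref{product laplacian} and \ref{product graphs}, $G$ factors as $G=G_1\times G_2$; fixing $v_1=|VG_1|$, $e_1=|EG_1|$, $e_2=|EG_2|$ and assuming $n+1\ge 2v_1$, an edge count gives $e_1+e_2+v_1(n+1-v_1)=2n+1$ (since $|EB|=n+1$), whence
\begin{equation*}
e_1+e_2=(2-v_1)n+v_1^2-v_1+1.
\end{equation*}
By Lemma \ref{lemma1} it suffices to force $v_1=1$. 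Supposing $v_1\ge 2$ and substituting $n+1\ge 2v_1$ yields
\begin{equation*}
e_1+e_2\le (2-v_1)(2v_1-1)+v_1^2-v_1+1=-(v_1-1)(v_1-3)+2,
\end{equation*}
and combined with $e_1+e_2\ge 0$ this confines $v_1$ to the values $2,3,4$.

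Next I would grind through each value of $v_1$ as a finite case analysis, exactly in the style of the unicyclic lemma. For each $v_1$, equation for $e_1+e_2$ pins down the possible pairs $(e_1,e_2)$, and the constraint $e_1\le\binom{v_1}{2}$ together with the few graphs on $v_1\le 4$ vertices limits $G_1$ to a short list; the complementary factor $G_2$ is then a disjoint union of a small graph with isolated vertices. In each subcase I would compute $\mathrm{Spec}(G_2)$ directly, apply Lemma \ref{product laplacian} twice to recover $\mathrm{Spec}(B)$ from $\mathrm{Spec}(G)=\mathrm{Spec}(B\times K_1)$, and then use Lemma \ref{spanning tree} to read off $s(B)=\lambda_1\cdots\lambda_{n-1}/n$. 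Since $B$ is bicyclic, Proposition \ref{9.2} tells us $s(B)\in\{rs,rs-1,rs-(t-1)^2\}$, a strong integrality and size constraint that eliminates most subcases outright (much as $s(U)=\frac{8(n-2)}{n}$ eliminated cases in Lemma \ref{unicyclic lemma}). The surviving subcases that force a specific small $n$ I would rule out using Lemma \ref{2nd max eigenvalue} against the bound $\alpha(B)$ of Proposition \ref{9.2}: the largest Laplacian eigenvalue of $B$ equals $n-2$ in these subcases, yet $\alpha(B)$ is strictly smaller, a contradiction.

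The delicate point, and the reason the lemma carries an exception, is the subcase that produces the genuine $L$-cospectral pair. I expect that when $G=2K_1\times(P_4+(n-5)K_1)$, the derived spectrum $\mathrm{Spec}(B)$ will match that of $\Theta_{3,2,5}$ for the critical value $n=7$: here $s(B)=rs-1=3\cdot5-1=14$ is consistent with $t=2$, and both the spanning-tree count and the eigenvalue bound fail to yield a contradiction. In that single subcase I would verify by routine calculation that
\begin{equation*}
\mathrm{Spec}(K_1\times\Theta_{3,2,5})=\mathrm{Spec}(2K_1\times(P_4+K_1)),
\end{equation*}
and observe that the two graphs are non-isomorphic (see Figure \ref{F2}), which is exactly the claimed exceptional cospectral mate $H_2$.

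The main obstacle will be organizing the case analysis so that Proposition \ref{9.2} is applied with the correct value of $t$ in each branch: the spanning-tree formula and the degree/eigenvalue bound $\alpha(B)$ depend on whether the two cycles of $B$ are disjoint ($t=0$), share a single vertex ($t=1$), share an edge ($t=2$), or share a longer path ($t\ge3$), so for each candidate spectrum I must check every admissible $(r,t,s)$ with $s(B)$ matching the computed value before declaring a contradiction. Isolating $\Theta_{3,2,5}$ as the unique survivor — and confirming that no other $(r,t,s)$ produces a cospectral partner — is the crux of the argument.
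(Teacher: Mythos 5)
Your overall strategy coincides with the paper's: factor $G=G_1\times G_2$, count edges to get $e_1+e_2=(2-v_1)n+v_1^2-v_1+1$, bound $v_1$, and then grind through the finitely many shapes of $G_2$ using $s(B)$, Proposition \ref{9.2} and Lemma \ref{2nd max eigenvalue}. However, there are concrete errors and a missing tool. First, a small one: your bound $-(v_1-1)(v_1-3)+2\ge 0$ forces $v_1\in\{2,3\}$ only; $v_1=4$ gives $e_1+e_2\le -1$, so your list ``$2,3,4$'' is off. Second, and more seriously, you misplace the exceptional case. $\Theta_{3,2,5}$ has $3+5-2=6$ vertices, so the cospectral pair arises in the branch $G_2=P_4+(n-5)K_1$ at $n=6$ (where $s(B)=21-42/n=14=3\cdot 5-1$, consistent with $t=2$, $r=3$, $s=5$), not at $n=7$ as you claim; indeed your own computation $s(B)=14$ is incompatible with $n=7$, where $s(B)=15=4\cdot4-1$ and the candidate is $\Theta_{4,2,4}$, which must be \emph{eliminated}, not accepted.

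Third, your proposed toolkit --- spanning-tree count plus the $\alpha(B)$ bound of Proposition \ref{9.2} against $\lambda(B)=n-2$ via Lemma \ref{2nd max eigenvalue} --- is not sufficient to close all surviving subcases. In the paper, the cases $n=7$ with $G_2=P_4+2K_1$, $n=8$ and $n=10$ with $G_2=K_1\times 3K_1+(n-5)K_1$, and $n=8,16,32$ with $G_2=C_3+(n-4)K_1$ all survive the eigenvalue bound; they are killed only by invoking item (5) of Lemma \ref{Laplacian spectrum} (the sum of squares of vertex degrees is a Laplacian invariant), combined with an extremal argument in the spirit of Proposition \ref{fact1} to find the maximal possible square sum of $\deg(B\times K_1)$ given the constraints $\Delta(B)\le\lambda(B)-1$ (Lemma \ref{min}) and the presence of $\Theta_{r,t,s}$, and comparing it with the square sum for $G_1\times G_2$. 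Your outline never mentions this invariant, so as written the plan would stall on several branches. Finally, you should also note that not every subcase with $v_1\ge 2$ ends in contradiction: in a few (e.g.\ $\mathrm{Spec}(B)=\{n-2,5,[2]^2,[1]^{n-5},0\}$ with $n=5$, or the $C_3$ branch with $n=4$) the graph $B$ is itself a product and $G\cong B\times K_1$ holds anyway, which must be checked rather than ruled out.
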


\begin{proof}
We shall use similar arguments and notations as Lemma \ref{lemma2} or \ref{unicyclic lemma}. Then we can assume that $G=G_1\times G_2$ is a graph $L$-cospectral to $B\times K_{1}$ and
\[|VB|=n,~~|VG_1|=v_1,~~|VG_2|=v_2,~~|EG_{1}|=e_1,~~|EG_{2}|=e_2.\]
We further assume $n+1\geq 2v_1$. By counting the edges of both $G_1\times G_2$ and $B\times K_{1}$, Lemma \ref{Laplacian spectrum} implies
\begin{equation}\label{kkbian}
    e_{1}+e_{2}=(2-v_1)n+v_1^2-v_1+1.
\end{equation}
By Lemma \ref{lemma1}, we are required to prove $v_1=1$. Suppose $v_1\geq 2$, applying $n+1\geq 2v_1$ to (\ref{kkbian}), we have
\begin{equation}\label{jjhebian}
e_{1}+e_{2}\leq (2-v_1)(2v_1-1)+v_1^{2}-v_1+1= -v_1^2+4v_1-1.
\end{equation}
The fact $e_1+e_2\ge 0$ implies that $v_1=2$ or $3$.\vspace{2mm}

\noindent\emph{Case 1.} $v_1=2$.  Clearly, $v_1=2$ implies that $e_1\le 1$. If $e_1=1$, the graph $G_1=K_1\times K_1$, and then $G$ can be written as $K_1\times (K_1\times G_2)$, which is clear from Lemma \ref{lemma1}. If $e_1=0$, then $G_1=2K_1$. Substituting $v_1=2$ into (\ref{kkbian}), we have $e_1+e_2=3$. Then we obtain $e_2=3$. According to $v_2=n-1$ and $e_2=3$, $G_{2}$ has to be one of the following graphs
\[3P_{2}+(n-7)K_1,~~ P_{2}+P_{3}+(n-6)K_1,\]
\[P_{4}+(n-5)K_1,~~K_1\times 3K_1+(n-5)K_1,~~C_{3}+(n-4)K_1.\]
Then consider the following cases. \vspace{2mm}

\noindent\emph{Case 1.1.} $G_{2}=3P_{2}+(n-7)K_1$.   Since $G_1=2K_1$, applying Lemma \ref{product laplacian} and by routine calculations, we can obtain
\[ \mathrm{Spec}(G)=\{n+1,n-1,[4]^3,[2]^{n-5},0\}.\] It follows that
\begin{equation}\label{F4000} \mathrm{Spec}(B)=\{n-2,[3]^3,[1]^{n-5},0\}.\end{equation}
The number of spanning trees of $B$ is given by
\[s(B)=27-\frac{54}{n}.\]
Thus $n\mid 54$. On the other hand, it is clear that $n\ge 7$ since $G_{2}=3P_{2}+(n-7)K_1$. Hence, $n=9,18,27$, or $54$. In the following, we shall rule out them case by case.\vspace{2mm}

\noindent\emph{Case 1.1.1.} $n=9$.   From (\ref{F4000}), the maximum Laplacian eigenvalue of $B$ is $7$. By Lemmas \ref{spanning tree} and \ref{2nd max eigenvalue}, it is easily obtained that
\[\alpha(B)\ge 7,~~s(B)=21.\]
Assume $B\in \mathcal{B}(9,r,t,s)$, i.e., $B$ contains $\Theta_{r,t,s}$ as a subgraph. Then $r+s\le 9+t$ by (\ref{F9.1}). If $t=0$, (\ref{F21}) implies $s(B)=rs=21$ which is impossible for $r+s\le 9$. If $t=1$, $(\ref{F22})$ implies $rs=21$. It follows that $r=3,s=7$. Then using $(\ref{F22})$ again, $\alpha(B)\le 6$, a contradiction. If $t=2$, $(\ref{F23})$ implies $s(B)=rs-1=21$ which has no solution since $s\geq r\geq 3$. If $t\ge 3$, from $(\ref{F24})$, we have
$\alpha(B)\leq n+t-r-s+5$. Applying $r+s\ge 4t-4$ in (\ref{F9.2}), we obtain $\alpha(B)\le n+9-3t$. Note that $\alpha(B)\ge n-2$. Then we have the following inequality
\begin{equation}\label{F9.9}
n-2\le \alpha(B) \le n+9-3t.
\end{equation}
Combined with $t\ge 3$, it forces $t=3$. From $s(B)=21=rs-(t-1)^2$, we obtain $r=s=5$. Applying $n=9,t=3,r=s=5$ to (\ref{F24}), it is easily obtained that $\alpha(B)\le \frac{33}{5}<7$, which is a contradiction. Hence, we proved $n\neq 9$\vspace{2mm}

\noindent\emph{Case 1.1.2.} $n=18,27$, or $54$.   We just give the case $n=18$ in details, but skip the cases $n=27,54$, since all arguments are the same. Suppose that $n=18$ and $B\in \mathcal{B}(18,r,t,s)$. Then the maximum Laplacian eigenvalue of $B$ is $n-2=16$. Lemmas \ref{spanning tree} and \ref{2nd max eigenvalue} imply
\[\alpha(B)\ge 16,~~s(B)=24.\]
If $t=0$ or $1$,  then $s(B)=rs=24$. It implies that $r=4,s=6$ or $r=3,s=8$. Applying (\ref{F21}) and (\ref{F22}) to all cases of $t,r,s$, we obtain
$\alpha(B)<15$, contradictions. If $t=2$, then $s(B)=rs-1=24$, i.e., $r=s=5$. By (\ref{F23}), we get $\alpha(B)<15$, a contradiction. If $t\ge 3$, then (\ref{F9.9}) implies $t=3$. Since $s(B)=rs-(t-1)^2=24$ and $s\ge r\ge 2t-2$, we obtain $t=3,r=4,s=7$. By (\ref{F24}), we have $\alpha(B)<15$, a contradiction.\vspace{2mm}

\noindent\emph{Case 1.2.} $G_{2}=P_{2}+P_{3}+(n-6)K_1$.   Since $\mathrm{Spec}(P_{3})=\{3,1,0\}$, by Lemma \ref{product laplacian}, we have
\[ \mathrm{Spec}(G)=\{n+1,n-1,5,4,3,[2]^{n-5},0\},\]
and then
\[\mathrm{Spec}(B)=\{n-2,4,3,2,[1]^{n-5},0\}.\]
From Lemma \ref{spanning tree}, $s(B)=24-\frac{48}{n}$. Note that $n\ge 6$ for $G_{2}=P_{2}+P_{3}+(n-6)K_1$. It follows that $n=6,8,12,16,24,$ or $48$.\vspace{2mm}

\noindent\emph{Case 1.2.1.} $n=6$.   Then the maximum Laplacian eigenvalue of $B$ is $n-2=4$. Lemma \ref{min} implies that the maximum vertex degree of $B$, denoted $\Delta(B)$, is at most $3$ and not identical $3$. Namely, all vertices degree of $B$ is at most $2$. It is a contradiction with that $B$ is a bicyclic graph.
\vspace{2mm}

\noindent\emph{Case 1.2.2.} $n=8$.   The maximum Laplacian eigenvalue of $B$ is $n-2=6$. Lemmas \ref{spanning tree} and \ref{2nd max eigenvalue} imply that
\[\alpha(B)\ge 6,~~s(B)=18.\]
If $t=0$ or $1$, then $s(B)=rs=18$. It follows that $r=3,s=6$. Then (\ref{F21}) and (\ref{F22}) imply $\alpha(B)<6$, contradictions. If $t=2$, then $s(B)=rs-1=18$ which has no solutions of $r,s$. If $t\geq 3$, then (\ref{F9.9}) implies $t=3$. It is impossible since $s(B)=rs-(t-1)^2=18$ and $s\ge r\ge 2t-2=4$.
\vspace{2mm}

\noindent\emph{Case 1.2.3.} $n=12,16,24,$ or $48$.   We only show $n\neq 12$ in details. The others are similar. Suppose $n=12$. Then Lemmas \ref{spanning tree} and \ref{2nd max eigenvalue} imply that
\[\alpha(B)\ge 10,~~s(B)=20.\]
If $t=0$ or $1$, then $s(B)=rs=20$. It follows that $r=4,s=5$. (\ref{F21}) and (\ref{F22}) imply $\alpha(B)<10$, contradictions. If $t=2$, then $s(B)=rs-1=20$, i.e., $r=3,s=7$. $(\ref{F23})$ implies $\alpha(B)<9$, a contradiction. If $t\geq 3$, then (\ref{F9.9}) implies $t=3$. Since $s(B)=rs-(t-1)^2=20$, we obtain $t=3,r=4,s=6$. By (\ref{F24}), we have
$\alpha(B)<10$, a contradiction.
\vspace{2mm}

\noindent\emph{Case 1.3.} $G_{2}=P_{4}+(n-5)K_1$.   Then $n\geq 5$. By routine calculations as above, we can obtain
\[\mathrm{Spec}(B)=\{n-2,3+\sqrt{2},3,3-\sqrt{2},[1]^{n-5},0\},\]
and then $s(B)=21-\dfrac{42}{n}$. It follows that $n$ must be $6,7,14,21,$ or $42$.
\vspace{2mm}

\noindent\emph{Case 1.3.1.} $n=6$.   It follows that $s(B)=14$. By similar arguments as above, we will obtain $t=2,r=3,s=5$ which can not be ruled out. Then $B$ consists of two cycles, $C_{3}$ and $C_{5}$, whose intersection is the path $P_2$, i.e., $B=\Theta_{3,2,5}$. Indeed, by routine calculations, we obtain that the Laplacian spectrum of $\Theta_{3,2,5}$ is exactly $\{3+\sqrt{2},4,3-\sqrt{2},3,1,0\}$. Hence, $\Theta_{3,2,5}\times  K_{1}$ is $L$-cospectral to $2K_{1}\times (P_{4}+K_{1})$, but not isomorphic, see Figure \ref{F2}.
\vspace{2mm}

\noindent\emph{Case 1.3.2.} $n=7$.    Note that the maximum Laplacian eigenvalue of $B$ is $5$. Applying Lemmas  \ref{spanning tree}, \ref{2nd max eigenvalue} and  \ref{min}, we obtain \[\alpha(B)\geq 5,~~\Delta(B)\le 3,~~s(B)=15, \]
 where $\Delta(B)$ is the maximum vertex degree of $B$. Thus $t\ne 1$, otherwise, $B$ has a vertex of degree at least $4$. If $t=0$, then (\ref{F21}) implies $s(B)=rs=15$, a contradiction to (\ref{F9.1}). If $t=2$, then (\ref{F23}) implies $s(B)=rs-1=15$, i.e., $r=s=4$. Since the maximum vertex degree of $B$ is $3$, combining with $n=7,t=2,r=s=4$, we obtain that the degree sequence of $B$ is $(3,3,3,2,2,2,1)$, denoted by \[\deg(B)=([3]^3,[2]^3,1),\]
where $[a]^b$ is a sequence of constant $a$ with multiplicity $b$.
It follows that
\[\deg(B\times  K_1)=(7,[4]^3,[3]^3,2).\]
On the other hand, since $G_1=2K_1$ and $G_{2}=P_{4}+(n-5)K_1=P_{4}+2K_1$, we have
\[\deg(G_1\times G_2)=([6]^2,[4]^2,[3]^2,[2]^2).\]
But it is easily checked that $7^2+3\cdot4^2+3\cdot3^2+2^2\neq 2\cdot(6^2+4^2+3^2+2^2)$, a contradiction to Lemma \ref{Laplacian spectrum}. If $t\geq 3$, then (\ref{F9.9}) implies $t=3$. It is impossible since $rs=s(B)+(t-1)^2=19$.\vspace{2mm}

\noindent\emph{Case 1.3.3.} $n=14,21$, or $42$.   We only disprove $n=42$ in details. The others are similar. Suppose $n=42$. Lemmas  \ref{spanning tree} and \ref{2nd max eigenvalue} imply that
\[\alpha(B)\ge 40,~~s(B)=20.\]
If $t=0$ or $1$,  then $s(B)=rs=20$, and then $r=4,s=5$. Applying (\ref{F21}) and (\ref{F22}), we obtain $\alpha(B)<40$,
contradictions. If $t=2$, then $s(B)=rs-1=20$, i.e., $r=3,s=7$. (\ref{F23}) implies $\alpha(B)<39$, a contradiction. If $t\geq 3$, then (\ref{F9.9}) implies $t=3$. Since $s(B)=rs-(t-1)^2=20$, we obtain $t=3,r=4,s=6$. By (\ref{F24}), we have $\alpha(B)<40$, a contradiction.
\vspace{2mm}

\noindent\emph{Case 1.4.} $G_2=K_1\times 3K_1+(n-5)K_1$.   Similar as above, we have $n\geq 5$ and
\[\mathrm{Spec(B)}=\{n-2,5,[2]^2,[1]^{n-5},0\},\]
and then the number of spanning trees of $B$ is $s(B)=20-\frac{40}{n}$. It forces $n=5,8,10,20,$ or $40$.
\vspace{2mm}

\noindent\emph{Case 1.4.1.} $n=5$.   Since $ \mathrm{Spec}(B)=\{5,3,[2]^2,0\}$, Lemma \ref{product graphs} implies that $B$ is the product of two graphs, say $B=B_1\times B_2$. If $B_1=K_1$, then we have $ \mathrm{Spec}(B_2)=\{2,[1]^2,0\}$. It follows that the number of spanning tree is $s(B_2)=\frac{2}{4}$, a contradiction. If $|VB_1|$=2, then $|VB_2|=3$. Notice that the second largest Laplacian eigenvalue of $B$ is 3, by Lemma \ref{product laplacian}, the maximum  Laplacian eigenvalue of $B_1$ is 0. Thus, $B_1=2K_1$, and then $B_2=3K_1$. It is obvious that $B\times K_1=G_1\times G_2$.\vspace{2mm}

\noindent\emph{Case 1.4.2.} $n=8$.   Note that the maximum Laplacian eigenvalue of $B$ is $6$. Applying Lemmas  \ref{spanning tree}, \ref{2nd max eigenvalue} and  \ref{min}, we obtain \[\alpha(B)\geq 6,~~\Delta(B)\le 4,~~s(B)=15.\]
If $t=0$, then $s(B)=rs=15$, i.e., $r=3,s=5$. Applying $(\ref{F21})$, we obtain $\alpha(B)<6$, a contradiction. If $t=1$, then $s(B)=rs=15$, i.e., $r=3,s=5$. It is easily obtained that the degree sequence of $B$ is \[\deg(B)=(4,3,[2]^5,1),\]
then the degree sequence of $B\times K_1$ is
\[\deg(B\times K_1)=(8,5,4,[3]^5,2),\]
whose square sum is $154$. But the degree sequence of $G_1\times G_2$ is
\[\deg(G_1\times G_2)=([7]^2,5,[3]^3,[2]^3),\]
whose square sum is $162\neq154$, a contradiction to Lemma \ref{Laplacian spectrum}, since $B\times K_1$ and $G_1\times G_2$ are $L$-cospectral. If $t=2$,  then $s(B)=rs-1=15$ implies $r=4,s=4$, i.e., $\Theta_{4,2,4}$
is a subgraph of $B$. Note that $|VB|-|V\Theta_{4,2,4}|=2$. Following the idea of the proof for Proposition \ref{fact1}, the square sum of the degree sequence of $B\times K_1$ is maximum iff  $\deg(B)=([4]^2,[2]^4,[1]^2)$. It follows that
\[\deg(B\times K_1)=(8,[5]^2,[3]^4,[2]^2),\]
whose square sum is $158$. But the square sum of the degree sequence of $G_1\times G_2$ is $162$, a contradiction to Lemma \ref{Laplacian spectrum}. If $t\geq 3$, then (\ref{F9.9}) implies $t=3$.  It is impossible since $rs=s(B)+(t-1)^2=19$.\vspace{2mm}

\noindent\emph{Case 1.4.3.} $n=10$. Note that the maximum Laplacian eigenvalue of $B$ is $8$. Applying Lemmas  \ref{spanning tree}, \ref{2nd max eigenvalue} and  \ref{min},  we obtain \[\alpha(B)\geq 8, ~~\Delta(B)\le 6,~~s(B)=16, \]
If $t=0$, then $s(B)=rs=16$, i.e., $r=4,s=4$. From (\ref{F21}), we obtain $\alpha(B)<8$, a contradiction. If $t=1$, then $s(B)=rs=16$, and then $r=s=4$. Thus $\Theta_{4,1,4}$ is a subgraph of $B$. Note that $|VB|-|V\Theta_{4,1,4}|=3$. Then the square sum of the degree sequence of $B\times K_1$ is maximum iff the degree sequence of $B$ is $(6,3,[2]^5,[1]^3)$. It follows that
\[\deg(B\times K_1)=(10,7,4,[3]^5,[2]^3),\]
whose square sum is $222$. Namely, the maximum square sum of the degree sequence of $B\times K_1$ is 222. But the square sum of the degree sequence of $G_1\times G_2$ is 234, a contradiction to Lemma \ref{Laplacian spectrum}. If $t=2$, then $s(B)=rs-1=16$ has no solution. If $t\geq 3$, then (\ref{F9.9}) implies $t=3$. Since $s(B)=rs-(t-1)^2=16$, we have $r=4,s=5$, i.e., $\Theta_{4,3,5}$
is a subgraph of $B$. Also consider the square sum of the degree sequence of $B\times K_1$. We can obtain that its maximum value is 226 only when \[\deg(B\times K_1)=(10,7,5,[3]^4,[2]^4).\]
But the square sum of the degree sequence of $G_1\times G_2$ is 234, a contradiction to Lemma \ref{Laplacian spectrum}.
\vspace{2mm}

\noindent\emph{Case 1.4.4.} $n=20$ or $40$.   We only disprove $n=20$ in details. The argument to disprove $n=40$ will be similar. Suppose $n=20$. Note that the maximum Laplacian eigenvalue of $B$ is $18$. Applying Lemmas  \ref{spanning tree}, \ref{2nd max eigenvalue} and  \ref{min}, we obtain \[\alpha(B)\geq 18,~~\Delta(B)\le 16, ~~ s(B)=18.\]
If $t=0$ or $1$, then $s(B)=rs=18$, i.e., $r=3,s=6$. Applying (\ref{F21}) and (\ref{F22}), we obtain $\alpha(B)<18$, a contradiction. If $t=2$, then $s(B)=rs-1=18$ has no solution. If $t\geq 3$, then we can also obtain (\ref{F9.9}), and then $t=3$. Since $s(B)=rs-(t-1)^2=18$, there is no solution.\vspace{2mm}

\noindent\emph{Case 1.5.} $G_{2}=C_{3}+(n-4)K_1$.   Clearly, $n\ge 4$. Since the Laplacian spectrum of $C_{3}$ is $\{[3]^2,0\}$, we can obtain
\[\mathrm{Spec}(B)=\{n-2,[4]^2,[1]^{n-4},0\}.\]
It follows that the number of spanning trees of $B$ is $s(B)=16-\frac{32}{n}$. Hence, $n=4,8,16$, or $32$.\vspace{2mm}\\
\emph{Case 1.5.1.} $n=4$.   It follows that $ \mathrm{Spec}(B)=\{[4]^2,2,0\}$. Then we have $B=K_1\times K_1\times 2K_1$ by Lemma \ref{product graphs}. It is easily seen that $B\times K_1= G_1\times G_2$.\vspace{2mm}

\noindent\emph{Case 1.5.2.} $n=8$.   Note that the maximum Laplacian eigenvalue of $B$ is $6$.  Applying Lemmas  \ref{spanning tree}, \ref{2nd max eigenvalue} and  \ref{min}, we obtain \[\alpha(B)\geq 6,~~\Delta(B)\le 4,~~s(B)=12. \]
If $t=0$, then $s(B)=rs=12$ implies that $r=3,s=4$, and then $\alpha(B)\leq 6$ by (\ref{F21}). Thus,
$\alpha(B)=6$, which forces the degree sequence of $B$ is $\deg(B)=(4,3,[2]^5,1)$. Then the degree sequence of $B\times K_1$ is $\deg(B\times  K_1)=(8,5,4,[3]^5,2)$, whose square sum is $154$. On the other hand, since $G_2=C_3+4K_1$ and $G_1=2K_1$, then the degree sequence of $G_1\times G_2$ is $\deg(G_1\times G_2)=([7]^2,[4]^3,[2]^4)$, whose square sum is $162$. It is a contradiction by Lemma \ref{Laplacian spectrum}. If $t=1$, then we also have $r=3,s=4$, i.e., $\Theta_{3,1,4}$ is a subgraph of $B$. Since $\Delta(B)\le 4$, then the square sum of the degree sequence of $B\times K_1$ is maximum iff $\deg(B)=([4]^2,[2]^4,[1]^2)$. It
follows that $\deg(B\times K_1)=(8,[5]^2,[3]^4,[2]^2)$ whose square sum is $158<162$, a contradiction to Lemma \ref{Laplacian spectrum}. If $t=2$, then $s(B)=rs-1=12$ has no solution. If $t\geq 3$, then we can also obtain (\ref{F9.9}), and then $t=3$. Since $s(B)=rs-(t-1)^2=12$, then $t=3, r=s=4$. Then $B$ contains $\Theta_{3,2,4}$ as a subgraph. Since $\Delta(B)\le 4$, then the square sum of the degree sequence of $B\times K_1$ is maximum iff $\deg(B)=([4]^2,3,[2]^2,[1]^3)$. It follows that $\deg(B\times K_1)=(8,[5]^2,4,[3]^2,[2]^3)$, whose square sum is $160<162$, a contradiction.
\vspace{2mm}

\noindent\emph{Case 1.5.3.} $n=16$.   Note that the maximum Laplacian eigenvalue of $B$ is $14$. Applying Lemmas  \ref{spanning tree}, \ref{2nd max eigenvalue} and \ref{min}, we obtain \[\alpha(B)\geq 14,~~\Delta(B)\le 12,~~s(B)=14. \]
If $t=0$ or $1$, then $s(B)=rs=14$ has no solution. If $t=2$, then $s(B)=rs-1=14$ implies $r=3,s=5$, i.e., $\Theta_{3,2,5}$ is a subgraph of $B$. Since $|VB|-|V\Theta_{3,2,5}|=0$ and and $\Delta(B)\le 12$, then the square sum of $\deg(B\times K_1)$ is maximum iff $\deg(B)=(12,4,[2]^4,[1]^{10})$. It follows that the square sum of $\deg(B\times K_1)$ is $526$. But the square sum of $\deg(G_1\times G_2)$ is $546$. It is a contradiction to Lemma \ref{Laplacian spectrum}. If $t\geq 3$, then (\ref{F9.9}) implies $t=3$. Since $s(B)=rs-(t-1)^2=14$,  we have $r=3,s=6$, a contradiction to (\ref{F9.2}).
\vspace{2mm}

\noindent\emph{Case 1.5.4.} $n=32$.   Note that the maximum Laplacian eigenvalue of $B$ is $30$. Applying Lemmas  \ref{spanning tree}, \ref{2nd max eigenvalue} and \ref{min}, we obtain
\[\alpha(B)\geq 30,~~\Delta(B)\le 28,~~s(B)=15. \]
If $t=0$, then $s(B)=rs=15$, i.e., $r=3,s=5$. Applying (\ref{F21}), we obtain $\alpha(B)<29$, a contradiction. If $t=1$, then $s(B)=rs=15$ implies $r=3,s=5$, i.e., $\Theta_{3,1,5}$ is a subgraph of $B$. Then, the square sum of the degree sequence of $B\times K_1$ is maximum iff $\deg(B)=(28,3,[2]^5,[1]^{25})$. It follows that the degree sequence of $B\times K_1$ is $(32,29,5,[3]^4,[2]^{26})$, whose square sum is $2014$. But the square sum of the degree sequence of $G_1\times G_2$ is $2082$, a contradiction to Lemma \ref{Laplacian spectrum}. If $t\geq 3$, then (\ref{F9.9}) implies $t=3$. Then $s(B)=rs-(t-1)^2=15$ has no solution.
\vspace{2mm}

\noindent\emph{Case 2.} $v_1=3$.   Substituting $v_1=3$ into (\ref{kkbian}), we have $0\leq e_{1}+e_{2}=-n+7$, i.e., $n\le 7$. From the assumption that $n\ge 2v_1-1$, we have $n\ge 5$. Then, $n=5,6$, or $7$.
\vspace{2mm}

\noindent\emph{Case 2.1.} $n=5$.   Substituting $v_1=3$ and $n=5$ into (\ref{kkbian}), we have $e_{1}+e_{2}=2$. Clearly, $v_{2}=n+1-v_1=3$. It follows that
\[G= P_{3}\times 3K_{1},  \text{~~or~~}   G=(P_{2}+K_{1})\times (P_{2}+K_{1}).\]
Since $G= P_{3}\times 3K_{1}=K_{1}\times (2K_{1}\times 3K_{1})$, Lemma \ref{lemma1} implies $B=2K_{1}\times 3K_{1}$. Now consider $G=(P_{2}+K_{1})\times (P_{2}+K_{1})$. By Lemma \ref{product laplacian}, the Laplacian spectrum of $G$ is $\{6,[5]^2,[3]^2,0\}$. Then the Laplacian spectrum of $B$ is $\{[4]^2,[2]^2,0\}$, and then $s(B)=64/5$, a contradiction.
\vspace{2mm}\\
\emph{Case 2.2.} $n=6$.   Substituting $v_1=3$ and $n=6$ into
(\ref{kkbian}), we have $e_{1}+e_{2}=1$. Clearly, $v_{2}=4$. It follows that
\[G=3K_{1}\times (P_{2}+2K_{1}),  \text{~~or~~}   G=4K_{1}\times (P_{2}+K_{1}).\]
\emph{Case 2.2.1.} $G=3K_{1}\times (P_{2}+2K_{1})$.  By Lemma \ref{product laplacian}, the Laplacian spectrum of $B$ is $\{4,[3]^2,[2]^2,0\}$. By Lemma \ref{min}, we have $\Delta(B)\leq 2$, a contradiction to that $B$ is a bicyclic graph.
\vspace{2mm}\\
\emph{Case 2.2.2.} $G=4K_{1}\times (P_{2}+K_{1})$.   By Lemma \ref{product laplacian}, the Laplacian spectrum of $B$ is
$\{5,3,[2]^3,0\}$. Then Lemma \ref{spanning tree} implies that $s(B)=20$. Suppose $B\in \mathcal{B}(6,r,t,s)$. If $t=0$, then $s(B)=rs=20$, and then $r=4,s=5$, a contradiction to (\ref{F9.1}). If $t\geq 1$, then $\alpha(B)=rs-(t-1)^2=20$ and $r+s\leq n+t=t+6$ has no solution, a contradiction.
\vspace{2mm}\\
\emph{Case 2.3.} $n=7$.   Substituting $v_1=3$ and $n=7$ into (\ref{kkbian}), we have $e_{1}+e_{2}=0$. Clearly, $v_{2}=5$. Then $G=3K_{1}\times5K_{1}$. By Lemma \ref{product laplacian}, the Laplacian spectrum of $B$ is $\{[4]^2,[2]^4,0\}$. Then $s(B)=196/7$, a contradiction.\vspace{2mm}\\
So far, we can conclude that, for all bicyclic graphs but $\Theta_{3,2,5}$ as in \emph{Case 1.3.1}, we have $v_1=1$. The proof is complete by Lemma \ref{lemma1}.
\end{proof}

From \emph{Case 1.3.1} of Lemma \ref{bicyclic lemma} and Lemma \ref{product laplacian}, it is trivial to get the following result.
\begin{corollary}
Graphs $\Theta_{3,2,5}\times K_m$ and $H_2\times K_{m-1}$ are $L$-cospectral for all positive integers $m$, where $H_2$ is given in Figure \rm{\ref{F2}}.
\end{corollary}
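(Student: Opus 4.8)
The plan is to bootstrap from the single established instance in \emph{Case 1.3.1} of Lemma \ref{bicyclic lemma} --- that $\Theta_{3,2,5}\times K_1$ and $H_2$ are $L$-cospectral --- to all $m$ by exploiting the multiplicative behaviour of the join recorded in Lemma \ref{product laplacian}. The one auxiliary fact I need is the following easy consequence of Lemma \ref{product laplacian}: if graphs $A$ and $B$ are $L$-cospectral, then for any graph $C$ the joins $A\times C$ and $B\times C$ are again $L$-cospectral. Indeed, $L$-cospectrality of $A$ and $B$ forces $|VA|=|VB|$ by Lemma \ref{Laplacian spectrum} and $\mathrm{Spec}(A)=\mathrm{Spec}(B)$ by definition; since Lemma \ref{product laplacian} expresses $\mathrm{Spec}(X\times C)$ purely in terms of $|VX|$, $|VC|$, $\mathrm{Spec}(X)$ and $\mathrm{Spec}(C)$, the two joins receive identical data and hence share a spectrum.

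Next I would set up the bookkeeping of orders. Since $\Theta_{3,2,5}$ consists of $C_3$ and $C_5$ glued along $P_2$, it has $3+5-2=6$ vertices, so $\Theta_{3,2,5}\times K_1$ has $7$ vertices; on the other side $H_2=2K_1\times(P_4+K_1)$ has $2+5=7$ vertices. Thus the two $L$-cospectral graphs $A=\Theta_{3,2,5}\times K_1$ and $B=H_2$ indeed have the same order --- as is automatic for cospectral graphs --- and $\Theta_{3,2,5}\times K_m$ and $H_2\times K_{m-1}$ both have $6+m$ vertices, which is the necessary compatibility check.

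Finally I would apply the auxiliary fact with $A=\Theta_{3,2,5}\times K_1$, $B=H_2$, and $C=K_{m-1}$: from the base case $A$ and $B$ are $L$-cospectral, so $(\Theta_{3,2,5}\times K_1)\times K_{m-1}$ and $H_2\times K_{m-1}$ are $L$-cospectral. Writing $K_m=K_1\times K_{m-1}$ and using associativity of the join gives $(\Theta_{3,2,5}\times K_1)\times K_{m-1}=\Theta_{3,2,5}\times K_m$, which is exactly the claim. For $m=1$ one reads $K_{m-1}$ as the empty factor, so the statement degenerates to the base case itself. There is no genuine obstacle here: the only points requiring any care are the matching of vertex counts --- automatic once cospectrality is known --- and the convention for the empty factor when $m=1$; everything else is a direct invocation of Lemma \ref{product laplacian}.
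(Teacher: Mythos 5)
Your proposal is correct and follows exactly the route the paper intends: the base case is \emph{Case 1.3.1} of Lemma \ref{bicyclic lemma}, and the extension to all $m$ is the observation that Lemma \ref{product laplacian} makes the spectrum of a join depend only on the spectra and orders of the factors, so joining two $L$-cospectral graphs with $K_{m-1}$ preserves $L$-cospectrality. The paper merely states this as ``trivial,'' so your write-up is simply a more explicit version of the same argument.
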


In the following, we will use induction to prove the last main result.
\begin{theorem}\label{bicyclic}
If a bicyclic graph $B$ is $L$-DS and $B\ne \Theta_{3,2,5}$, then the product $B\times K_m$ is $L$-DS for all positive integer $m$.
\end{theorem}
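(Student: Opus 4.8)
The plan is to mirror the inductive scheme of Theorems \ref{treekm} and \ref{unicyclic}, the base case $m=1$ being Lemma \ref{bicyclic lemma}. Suppose $G$ is $L$-cospectral to $B\times K_m$ with $m\ge 2$ and set $n=|VB|$; by Lemmas \ref{product laplacian} and \ref{product graphs} I would write $G=G_0\times\cdots\times G_m$ with $v_i=|VG_i|$, $e_i=|EG_i|$, ordered so that $v_0\ge\cdots\ge v_m$ and $\sum_{i=0}^{m}v_i=n+m$. The easy half is the reduction step: if some factor is $K_1$, say $v_m=1$, then Lemma \ref{product laplacian} gives $\mathrm{Spec}(G_0\times\cdots\times G_{m-1})=\mathrm{Spec}(B\times K_{m-1})$, and the induction hypothesis forces $G_0\times\cdots\times G_{m-1}=B\times K_{m-1}$, hence $G=B\times K_m$. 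So the entire difficulty lies in the case $v_m\ge 2$.

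First I would repeat the edge count of Theorem \ref{treekm}, the only change being that a bicyclic graph has $|EB|=n+1$ rather than $|ET|=n-1$, which shifts the counting identity (\ref{sumes}) by a constant. Applying Proposition \ref{fact1} to $v_0\ge\cdots\ge v_m\ge 2$ exactly as in the passage from (\ref{eq4}) to (\ref{bianfanwei}), I expect to obtain $\sum_{i=0}^{m}e_i\le-\frac{1}{2}(m^2-m-6)=-\frac{1}{2}(m-3)(m+2)$. Since $\sum_{i=0}^{m}e_i\ge 0$, this already forces $m\in\{2,3\}$; feeding $n\ge m+2$ back into the bound then pins $n=5$ when $m=3$ and restricts $n\in\{4,5,6\}$ when $m=2$. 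For each such pair Proposition \ref{fact1} fixes the multiset of the $v_i$ and the exact value of $\sum_{i=0}^{m}e_i$, leaving only finitely many candidate graphs $G$.

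The core of the argument is then a finite case check run through Lemmas \ref{product laplacian} and \ref{spanning tree}. For $(m,n)=(3,5)$ the only candidate is $G=2K_1\times 2K_1\times 2K_1\times 2K_1$, which forces $\mathrm{Spec}(B)=\{[3]^4,0\}$ and hence $s(B)=81/5$, impossible. For $m=2$ I would run through $n=4,5,6$: in almost every configuration the spectrum of $B$ read off from Lemma \ref{product laplacian} yields a non-integral spanning-tree number (for instance $s(B)=64/3$ or $s(B)=72/5$), ruling the configuration out. The only resistant configurations are those containing a factor $K_2=K_1\times K_1$, namely $G=K_2\times K_2\times 2K_1$ when $n=4$ and $G=3K_1\times K_2\times 2K_1$ when $n=5$, where $s(B)$ comes out a genuine integer. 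For these I would rewrite $G$ as $(K_4-e)\times K_2$ and as $K_{2,3}\times K_2$ respectively, using $K_4-e=K_1\times K_1\times 2K_1$ and $K_{2,3}=2K_1\times 3K_1$, both of which are bicyclic and $L$-DS; then Lemma \ref{lemma1}, applied with the $L$-DS graph $B$, gives $B=K_4-e$ (resp.\ $B=K_{2,3}$) and therefore $G=B\times K_2$.

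The hard part, and what separates this theorem from the tree and unicyclic cases, is exactly that the branch $v_m\ge 2$ does not uniformly collapse to a contradiction: two configurations survive as genuine isomorphisms $G\cong B\times K_2$ rather than as absurdities. The bookkeeping must therefore be careful enough to guarantee that every surviving tuple $(m,n,\{v_i\},\{e_i\})$ with $v_m\ge 2$ either produces a non-integral $s(B)$ or factors as $B'\times K_2$ for an $L$-DS bicyclic $B'$, with no third possibility escaping. I also expect to verify that the cospectral mate responsible for excluding $\Theta_{3,2,5}$, namely $2K_1\times(P_4+K_1)$ arising at $m=1$, cannot reappear for $m\ge 2$: a $P_4$ factor costs three edges, whereas $\sum_{i=0}^{m}e_i\le 2$ throughout the range $m=2$, so no new exceptional $B$ is introduced and the induction closes.
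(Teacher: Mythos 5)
Your proposal is correct and follows essentially the same route as the paper: induction on $m$ with Lemma \ref{bicyclic lemma} as the base case, the join decomposition $G=G_0\times\cdots\times G_m$, the edge count combined with Proposition \ref{fact1} to force $m\in\{2,3\}$ and $n\in\{4,5,6\}$, and a finite check in which exactly the two configurations $K_2\times K_2\times 2K_1$ and $3K_1\times K_2\times 2K_1$ survive as genuine isomorphisms $B\times K_2$ with $B=K_4-e$ and $B=K_{2,3}$. The only (harmless) divergences are in how the dead-end cases are closed: you kill $(m,n)=(3,5)$ and the $n=6$ configuration via non-integrality of $s(B)$, where the paper invokes eigenvalue bounds (Lemmas \ref{product laplacian} and \ref{min}); your versions check out, provided you do note that the partition $(3,3,2)$ at $n=6$ is already excluded because it forces $\sum_i e_i=-1<0$.
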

\begin{proof}The idea to prove this theorem is similar as the proof of Theorem \ref{treekm} or \ref{unicyclic}. We repeat some arguments of Theorem \ref{treekm}. The statement for $m=1$ is given in Lemma \ref{bicyclic lemma}. Now we assume $m\ge 2$. Let $G$ be a graph $L$-cospectral to $B\times K_{m}=B\times \underbrace{K_1\times K_1\times\cdots\times K_1}_m$. By Lemma \ref{product graphs}, $G$ can be written as
\[G=G_{0}\times G_{1}\times\cdots\times G_{m}.\]
Fix notations as follows,
\begin{equation}\label{F40}n=|VB|,~~e_{i}=|EG_{i}|,~~v_{i}=|VG_{i}|  \text{~~~~for~~} i=0,1,\dots,m.\end{equation}
Without loss of generality, assume $v_{0}\geq v_{1}\geq\cdots\geq v_{m}$. It is obvious  that $\sum_{i=0}^{m}v_i=n+m$ by Lemma \ref{Laplacian spectrum}. In the following, we are going to prove $v_{m}=1$ by contradiction. Now suppose $v_{m}\geq 2$. It follows that $v_i\ge 2$ for all $i=0,\ldots,m$. Then we have $m+n=\sum_{i=0}^{m}v_i\ge 2(m+1)$, so $n\ge m+2$. For convenience, we list those conclusions we have obtained,
\begin{equation}\label{F41}
m\ge 2,~~v_0\ge\cdots\ge v_m\ge 2,~~m+n=\sum_{i=0}^{m}v_i,~~n\ge m+2.
\end{equation}
 Combining $v_0\ge\cdots\ge v_m\ge 2$ with $\sum_{i=0}^{m}v_i=n+m$, by Proposition \ref{fact1}, we have
\begin{equation}\label{F42}
\sum_{i=0}^{m}v_i^2\le (n-m)^2+4m.
\end{equation}
Since $ \mathrm{Spec}(G)= \mathrm{Spec}(T\times K_m)$, Lemma \ref{Laplacian spectrum} implies that $G$ and $T\times K_m$ have the same number of edges. Counting the edges of both $G$ and $T\times K_{m}$, we have
\begin{equation}\label{F43}
\sum_{i=0}^{m}e_{i}+\sum_{0\leq i<j\leq m}v_{i}v_{j}=n+1+mn+\frac{m(m-1)}{2}.
\end{equation}
Applying $\sum_{i=0}^{m} v_i=n+m$ to (\ref{F43}), we have
\begin{eqnarray}\label{F44}
\sum_{i=0}^{m}e_{i}=\frac{1}{2}\left(\sum_{i=0}^{m}v_{i}^{2}-n^{2}-m\right)+n+1.
\end{eqnarray}
Applying (\ref{F42}) to (\ref{F44}), it follows that
\begin{eqnarray}\label{F45}
\sum_{i=0}^{m}e_{i}\leq (1-m)n+\frac{1}{2}(m^2+3m)+1.
\end{eqnarray}
Note that, from (\ref{F41}), we have $1-m<0$ and $n\ge m+2$. Then (\ref{F45}) implies
\begin{eqnarray}\label{F46}
\sum_{i=0}^{m}e_{i}\leq -\frac{1}{2}(m^2-m-6).
\end{eqnarray}
Applying the fact $\sum_{i=0}^{m}e_i\ge 0$ to (\ref{F46}), it is easily obtained that $m=2$ or $3$.
\vspace{2mm}

\noindent\emph{Case 1.} $m=3$.   Substituting $m=3$ into (\ref{F45}) and (\ref{F46}), we have
\[0=\sum_{i=0}^{m}e_{i}\leq -2n+10.\]
It follows that $n\leq 5$. But $n\geq m+2=5$ by (\ref{F41}). Hence, $n=5$. It follows that $|VG|=\sum_{i=0}^{3}v_i=8$. Since $v_0\ge\cdots\ge v_3\ge 2$, then we have
\[v_{0}=v_{1}=v_{2}=v_{3}=2,  \text{~and~}   e_{0}=e_{1}=e_{2}=e_{3}=0.\]
Thus, $G=2K_{1}\times 2K_{1}\times 2K_{1}\times 2K_{1}$. By Lemma \ref{product laplacian}, the minimal nonzero Laplacian eigenvalue of $G$ is 2. But the minimal nonzero Laplacian eigenvalue of $B\times K_{3}$ is at least 3, a contradiction.
\vspace{2mm}

\noindent\emph{Case 2.} $m=2$.   By (\ref{F41}), we have  $n\geq m+2=4$. On the other hand, (\ref{F46}) implies that \[0\le\sum_{i=0}^{m}e_{i}\le -n+6.\] Then we have $n=4,5$, or $6$.
\vspace{2mm}\\
\emph{Case 2.1.} $n=4$.   Then $|VG|=v_{0}+v_{1}+v_{2}=6$ and $v_{0}\geq v_{1}\geq v_{2}\geq2$. It follows that $v_{0}=v_{1}=v_{2}=2$. By (\ref{F43}), we have $e_{0}+e_{1}+e_{2}=1$. It follows that
\[G= K_{2}\times K_{2}\times 2K_{1}=K_{1}\times K_{3}\times 2K_{1}.\]
By Lemma \ref{lemma1}, we have $G=B\times K_3$.
\vspace{2mm}

\noindent\emph{Case 2.2.} $n=5$.   Similar as above, we can obtain that
\[v_{0}=3, ~~v_{1}=v_{2}=2, \text{~and~} e_{0}+e_{1}+e_{2}=1.\]
Namely,
\begin{eqnarray*}
% \nonumber to remove numbering (before each equation)
 G=3K_{1}\times 2K_{1}\times K_{2}, \text{~or~} G=(P_{2}+K_{1})\times 2K_{1}\times 2K_{1}.
\end{eqnarray*}
If $G=3K_{1}\times 2K_{1}\times K_{2}=3K_{1}\times P_{3}\times K_{1}$, applying Lemma \ref{lemma1}, we have $G=B\times K_3$.
If $G=(P_{2}+K_{1})\times 2K_{1}\times 2K_{1}$, by Lemma \ref{product laplacian}, the Laplacian spectrum of $G$ is
$\{[7]^2,6,[5]^2,4,0\}$, and then the Laplacian spectrum of $B$ is $\{4,[3]^2,2,0\}$. So $s(B)=72/5$, it is a contradiction.
\vspace{2mm}

\noindent\emph{Case 2.3.} $n=6$.   We have $e_0+e_1+e_2=0$. Then
\begin{eqnarray*}
% \nonumber to remove numbering (before each equation)
G=4K_{1}\times 2K_{1}\times 2K_{1},  \text{~or~}  G=3K_1\times 3K_{1}\times 2K_{1}.
\end{eqnarray*}
The Laplacian spectrum of $G$ is $\{[8]^2,[6]^2,[4]^3,0\}$ or $\{[8]^2,6,[5]^4,0\}$. Then the Laplacian spectrum of $B$ is $\{[4]^2,[2]^3,0\}$ or $\{4,[3]^4,0\}$. Namely, the maximum Laplacian eigenvalue of $B$ is $4$. By Lemma \ref{min}, $\Delta(B)\leq 2$, a contradiction.
\vspace{2mm}\\
Hence $v_{m}=1$, \rm{i.e.,} $G_{m}=K_{1}$. Since $B\times
K_{m}=B\times K_{m-1}\times K_{1}$, by Lemma \ref{product
laplacian}, $B\times K_{m-1}$ is $L$-cospectral to $G_{0}\times
G_{1}\times \cdots \times G_{m-1}$. Using the induction hypothesis
on $m-1$, $B\times K_{m-1}$ and $G_{0}\times G_{1}\times \cdots
\times G_{m-1}$ are isomorphic. Hence, $B\times K_{m}= G$. This completes the proof.
\end{proof}

From \cite{kn:Wang10}, we know all graphs $\Theta_{r,1,s}$ with $r,s\neq 3$ are determined by their Laplacian spectra, then Theorem \ref{bicyclic} implies that $\Theta_{r,1,s}\times K_{m}$ with $r,s\neq 3$ are $L$-DS graphs.

\end{document}